\documentclass[final,leqno]{siamltex704}
\usepackage{amsmath}
\usepackage{graphicx}
\usepackage[notcite,notref]{showkeys}
\usepackage{mathrsfs}
\usepackage{color}
\usepackage{float}
\usepackage{amsfonts,amssymb}
\usepackage{dsfont}
\usepackage{pifont}
\usepackage{wrapfig} 
\usepackage{hyperref}
\usepackage{multirow} 
\numberwithin{equation}{section}
\usepackage{threeparttable}

\def\3bar{{|\hspace{-.02in}|\hspace{-.02in}|}}
\def\E{{\mathcal{E}}}
\def\T{{\mathcal{T}}}

\def\bpsi{\boldsymbol{\psi}}

\def\pT{{\partial T}}

\def\W{{\mathcal{W}}}

\def\bn{{\mathbf{n}}}
\def\bb{{\mathbf{b}}}

\def\bF{{\mathbf{F}}}

\newtheorem{remark}{Remark}[section]
\newtheorem{algorithm}{Primal-Dual Weak Galerkin Algorithm}[section]

\setlength{\parindent}{0.25in} \setlength{\parskip}{0.08in}

\title {New Primal-Dual Weak Galerkin Finite Element Methods  for Convection-Diffusion Problems}

\begin{document}

\author{
Waixiang Cao \thanks{ School of Mathematical Sciences, Beijing Normal University, Beijing 100875, China (caowx@bnu.edu.cn). The research of Waixiang Cao was partially supported by NSFC grant No. 11871106.}
\and
Chunmei Wang \thanks{Department of Mathematics \& Statistics, Texas Tech University, Lubbock, TX 79409, USA (chunmei.wang@ttu.edu). The research of Chunmei Wang was partially supported by National Science Foundation Award DMS-1849483.}}

\maketitle

\begin{abstract}
This article devises a new primal-dual weak Galerkin finite element method for the convection-diffusion equation. Optimal order error estimates are established for the primal-dual weak Galerkin approximations in various discrete norms and the standard $L^2$ norms. A series of numerical experiments are conducted and reported to verify the theoretical findings.
\end{abstract}

\begin{keywords} primal-dual, weak Galerkin, finite element methods, convection-diffusion, weak gradient, polygonal or polyhedral meshes.
\end{keywords}

\begin{AMS}
Primary, 65N30, 65N15, 65N12, 74N20; Secondary, 35B45, 35J50,
35J35
\end{AMS}

\pagestyle{myheadings}

\section{Introduction}
This paper is concerned with new development of numerical methods for the convection-diffusion equations. For simplicity, we consider the model problem that seeks an unknown function $u$ satisfying
\begin{equation}\label{model}
\begin{split}
-\nabla\cdot (a \nabla u +\bb  u)=&f, \qquad \text{in}\quad \Omega,\\
u=&g_1, \qquad  \text{on}\quad  \Gamma_D,\\
(a \nabla u +\bb u)\cdot \bn=&g_2,  \qquad  \text{on}\quad   \Gamma_N,
\end{split}
\end{equation}
where $\Omega\subset \mathbb R^d (d=2, 3)$ is an open bounded polygonal ($d=2$) or polyhedral ($d=3$) domain with Lipschitz continuous boundary $\partial \Omega$, $\Gamma_D$ is the Dirichlet boundary, $\Gamma_N=\partial \Omega\setminus \Gamma_D$ is the Neumann boundary, and $\bn$ is the unit outward normal direction to the Neumann boundary $\Gamma_N$. We assume that the convection tensor $\bb\in [L^{\infty} (\Omega)]^d$ is bounded, and the diffusion tensor $a=\{a_{ij}\}_{d\times d}$ is symmetric and positive definite in the sense that there exists a constant $\alpha>0$, such that
$$
\xi^T a\xi\ge \alpha\xi^T\xi, \qquad \forall \xi\in \mathbb R^d.
$$
Furthermore, we assume that the diffusion tensor $a$ and the convection tensor $\bb$ are uniformly piecewise continuous functions.

The convection-diffusion equations arise in many areas of science and engineering. Readers are referred to the ``Introduction'' Section in \cite{wz2019} and the references cited therein for a detailed description of the convection-diffusion equations.

The weak Galerkin (WG) finite element method was first introduced by Wang and Ye in \cite{Wang:YeWG2013} for second order elliptic equations, and later was widely used for solving various partial differential equations, e.g., \cite{mu-wang-ye-2015, wang-ye-2014,wang-ye-2015, w1, w2, w3, w4, w5, w6, w7}. Recently, the authors in \cite{WW-mathcomp} have developed a new numerical scheme, called ``primal-dual weak Galerkin (PDWG) finite element method'' for the second order elliptic problem in non-divergence form. PDWG uses the weak Galerkin strategy to construct the {\it discrete weak Hessian} operator in the weak formulation of the model PDEs, and further seeks a discontinuous function which minimizes a stabilizer defined on the boundary of each element with the constraint given by the weak formulation of the model PDEs weakly defined on each element. The Euler-Lagrange method was employed to solve the constrained minimization problem leading to the primal-dual weak Galerkin finite element method, which has been further studied in \cite{ww2017, ww2018, wz2019, wwhyperbolic, w2018}. The primal-dual weak Galerkin finite element method has shown the promising features as a discretization approach due to: (1) it works well for a wide class of PDE problems for which no traditional variational formulations are available; and (2) it is applicable to virtually any PDE problems where the inf-sup condition is satisfied.

Using the usual integration by parts one may derive a weak formulation for the model problem (\ref{model}) as follows: Find  $u\in H^1(\Omega)$ satisfying $u|_{\Gamma_D}=g_1$ and $(a \nabla u+\bb u)\cdot \bn|_{\Gamma_N}=g_2$, such that  
\begin{equation}\label{weakform}
\begin{split}
&\int_T (a\nabla u+\bb  u)\cdot \nabla w dT -\int_{\partial T} (a\nabla u+\bb u) \cdot \bn w ds\\
=&\int_T fw dT, \quad \forall T\subset \Omega, \ w\in H^1(T).
\end{split}
\end{equation}
 The PDWG numerical scheme developed in this paper is based on the weak formulation \eqref{weakform} for the convection-diffusion model problem (\ref{model}). The gradient operator is the principal player in \eqref{weakform} so that a reconstructed gradient (i.e., weak gradient) is crucial in the PDWG finite element scheme. In contrast, the PDWG finite element method developed in \cite{wz2019} was based on a weak form principled by the operator ${\cal L}=\nabla \cdot (a\nabla)$ so that a reconstructed weak ${\cal L}$ played a key role in the construction of the numerical scheme. The two numerical methods are thus sharply different from each other, and each has its own advantage in theory and practical computation.

The rest of the paper is organized as follows. In Section 2, we present our primal-dual weak Galerkin scheme for the model problem \eqref{model} based on the weak formulation (\ref{weakform}). In Section 3, we shall establish a result on the solution existence and uniqueness for the numerical method. Section 4 is devoted to the establishment of the property of mass conservation. The error equations for the primal-dual weak Galerkin algorithm are derived in Section 5. Sections 6-7 are devoted to the establishment of some optimal order error estimates for the PDWG solution in discrete norms as well as the usual $L^2$-norm. Finally,  various numerical examples are presented in the last section to support our theoretical findings.

{\color{black} 

   Throughout this paper,  we adopt standard notations for Sobolev spaces such as $W^{m,p}(D)$ on sub-domain $D\subset\Omega$ equipped with
    the norm $\|\cdot\|_{m,p,D}$ and the semi-norm $|\cdot|_{m,p,D}$. When $D=\Omega$, we omit the index $D$; and if $p=2$, we set
   $W^{m,p}(D)=H^m(D)$,
   $\|\cdot\|_{m,p,D}=\|\cdot\|_{m,D}$, and $|\cdot|_{m,p,D}=|\cdot|_{m,D}$, and if $m=0,p=2$, we set $\|\cdot\|_{m,p,D}=\|\cdot\|_{D}$. 

}

\section{Numerical Algorithm}\label{Section:WGFEM}

Let ${\cal T}_h$ be a partition of the domain $\Omega$ into polygons in 2D or polyhedra in 3D which is shape regular in the sense of \cite{wy3655}. Denote by ${\mathcal E}_h$ the set of all edges or flat faces in ${\cal T}_h$ and  ${\mathcal E}_h^0={\mathcal E}_h \setminus \partial\Omega$ the set of all interior edges or flat faces. Denote by $h_T$ the meshsize of $T\in {\cal T}_h$ and
$h=\max_{T\in {\cal T}_h}h_T$ the meshsize for the partition ${\cal T}_h$.

By a weak function on $T\in\T_h$ we mean a triplet $v=\{v_0,v_b, v_n\}$ such that $v_0\in L^2(T)$, $v_b\in L^{2}(\partial T)$ and $v_n\in L^{2}(\partial T)$, where $\pT$ is the boundary of $T$. The first and the second components, namely $v_0$ and $v_b$, should be understood as the value of $v$ in the interior and on the boundary of $T$ respectively. The third component $v_n$ refers to the value of $(a\nabla v+\bb v) \cdot \bn$ on $\pT$. Note that $v_b$ and $v_n$ may not necessarily be the trace of $v_0$ and $(a\nabla v_0+\bb v_0) \cdot \bn$ on $\partial T$. Denote by $\W(T)$ the space of all weak functions on $T$; i.e.,
\begin{equation}\label{2.1}
\W(T)=\{v=\{v_0,v_b, v_n \}: v_0\in L^2(T), v_b\in
L^{2}(\partial T), v_n\in L^{2}(\partial T)\}.
\end{equation}

The weak gradient of $v\in \W(T)$, denoted by $\nabla_w  v$, is
defined as a linear functional on $[H^1(T)]^d$ such that
\begin{equation*}
(\nabla_w v,\boldsymbol{\psi})_T=-(v_0,\nabla \cdot
\boldsymbol{\psi})_T+\langle v_b,\boldsymbol{\psi}\cdot
\textbf{n}\rangle_{\partial T},
\end{equation*}
for all $\boldsymbol{\psi}\in [H^1(T)]^d$. Denote by $P_r(T)$ the space of polynomials on $T$ with degree $r\ge 0$. A discrete version of $\nabla_{w} v$, denoted by $\nabla_{w, r, T}v$, is defined as the unique vector-valued polynomial in $[P_r(T) ]^d$ satisfying
\begin{equation}\label{disgradient}
(\nabla_{w,r,T}  v, \boldsymbol{\psi})_T=-(v_0,\nabla \cdot
\boldsymbol{\psi})_T+\langle v_b, \boldsymbol{\psi} \cdot
\textbf{n}\rangle_{\partial T}, \quad\forall\boldsymbol{\psi}\in
[P_r(T)]^d.
\end{equation}
For smooth $v_0$, we have from the usual integration by parts that
\begin{equation}\label{disgradient*}
(\nabla_{w, r, T} v, \boldsymbol{\psi})_T= (\nabla v_0,
\boldsymbol{\psi})_T-\langle v_0- v_b, \boldsymbol{\psi} \cdot
\textbf{n}\rangle_{\partial T}, \quad\forall\boldsymbol{\psi}\in
[P_r(T)]^d.
\end{equation}


For any given integer $k\geq 1$, denote by $W_k(T)$ the local discrete weak function space; i.e.,
$$
W_k(T)=\{\{v_0, v_b, v_n\}: v_0\in P_k(T), v_b\in P_k(e),  v_n \in P_{l}(e),e\subset \partial T\},
$$
where $l=k-1$ or $l=k$. Patching $W_k(T)$ over all the elements $T\in {\cal T}_h$
through a common value $v_b$ and $\pm v_n$ on the interior interface $\E_h^0$, we arrive at a global
weak finite element space $W_h$; i.e.,
$$
W_h=\big\{\{v_0, v_b, v_n\}:\{v_0, v_b, v_n\}|_T\in W_k(T), \forall T\in {\cal T}_h \big\}.
$$
Denote by $W_h^0$ the subspace of $W_h$ with homogeneous Dirichlet  and Neumann boundary conditions; i.e.,
\begin{equation}\label{EQ:2020-Jan-17-000}
W_h^0=\{\{v_0, v_b, v_n\}\in W_h: v_b=0 \ \text{on}\ \Gamma_D, v_n=0 \ \text{on}\ \Gamma_N\}.
\end{equation}

Next, let $M_h$ be the finite element space consisting of piecewise polynomials of degree $k$; i.e.,
\begin{equation}\label{EQ:2020-Jan-17-001}
M_h=\{\sigma: \sigma|_T\in P_{k}(T),  \forall T\in {\cal T}_h\}.
\end{equation}

\begin{remark}
The finite element space $M_h$ in \eqref{EQ:2020-Jan-17-001} can also be constructed by using piecewise polynomials of degree $k-1$ in the forthcoming numerical scheme. All the mathematical results to be presented in this paper can be extended to the case of $k-1$ without any difficulty. 
\end{remark}

For simplicity,  for any $v=\{v_0, v_b, v_n\}\in
W_h$, denote by $\nabla_{w}v$ the discrete weak gradient
$\nabla_{w, k-1, T}v$ computed  by using
(\ref{disgradient}) on each element $T$; i.e.,
$$
(\nabla_{w}v)|_T= \nabla_{w ,k-1,T}(v|_T), \qquad v\in W_h.
$$
Let us introduce the following bilinear forms:
\begin{equation*}
\begin{split}
s(u, v)=&\sum_{T\in {\cal T}_h} h_T^{-3}\langle u_0-u_b, v_0-v_b\rangle_{\partial T}\\
&+ h_T^{-1} \langle (a \nabla u_0+\bb u_0)\cdot \bn-u_n, (a \nabla v_0+\bb v_0)\cdot \bn-v_n\rangle_{\partial T},\\
b(u,  \lambda)=&\sum_{T\in {\cal T}_h}(a\nabla_w u+\bb  u_0, \nabla  \lambda)_T-\langle u_n,  \lambda\rangle_{\partial T},\\
c(\lambda, \sigma)=& \tau_1\sum_{T\in {\cal T}_h} h_T^2 (\nabla \lambda, \nabla \sigma)_T+ \tau_2\sum_{T\in {\cal T}_h} h_T^4\sum_{i, j=1}^d(\partial_{ij}^2 \lambda,\partial_{ij}^2 \sigma)_T,
\end{split}
\end{equation*}
where $u, v\in W_h$ and $ \lambda, \sigma\in M_h$,  $\tau_1 \geq 0$ and $\tau_2 \geq 0$ are two mesh-independent parameters.

Let $k \geq 1$ and $T\in\T_h$. Denote by $Q_0^{(k)}$ the $L^2$ projection operator onto $P_k(T)$. For each edge or face $e\subset\partial T$, denote by $Q_b^{(k)}$ and $Q_n^{(l)}$ the $L^2$ projection operators onto $P_{k}(e)$ and $P_{l}(e)$, respectively. For any $w\in H^1(\Omega)$, denote by $Q_h w$ the $L^2$  projection onto the weak finite element space $W_h$ such that on each element $T$,
$$
Q_hw=\{Q_0^{(k)}w,Q_b^{(k)}w, Q_n^{(l)} ( (a\nabla w+\bb w) \cdot \bn)\}.
$$
Denote by ${\cal Q}^{(k-1)}_h$ the $L^2$ projection operator onto the space $[P_{k-1}(T)]^d$.

The numerical scheme for the convection-diffusion problem (\ref{model}) based on the variational formulation (\ref{weakform}) can be stated as follows:
\begin{algorithm}
Find $(u_h;\lambda_h)\in W_h
\times M_{h }$ satisfying $u_b=Q_b^{(k)} g_1$ on $\Gamma_D$ and  $u_n =Q_n ^{(l)} g_2$ on $\Gamma_N$, such that
\begin{eqnarray}\label{32}
s(u_h, v)+b(v, \lambda_h)&=& 0, \qquad \forall v\in W^0_{h},\\
-c(\lambda_h,  \sigma)+ b(u_h, \sigma)&=&(f,\sigma),\quad  \forall \sigma\in M_h.\label{2}
\end{eqnarray}
\end{algorithm}

\begin{remark}
For the case of $l=k$, one may take $\tau_1=\tau_2=0$ and thus $c(\lambda_h,  \sigma)=0$; for the case of $l=k-1$ and $k=1$, one may take $\tau_1>0$ and $\tau_2=0$; for the case of $l=k-1$ and $k\geq 2$, one would take $\tau_1=0$ and $\tau_2>0$, as suggested by the mathematical theory.
\end{remark}

\section{Solution Existence and Uniqueness}\label{Section:EU}
For the sake of analysis, in what follows of this paper, we assume that the diffusion tensor $a$ and the convection tensor $\bb$ in the convection-diffusion equation (\ref{model}) are piecewise constants in $\Omega$ with respect to the finite element partition ${\cal T}_h$. However, the analysis can be extended to the case that $a$ and $\bb$ are piecewise smooth functions without any difficulty. 

The $L^2$ projection operators $Q_h$ and ${\cal Q}^{(k-1)}_h$ satisfy the following commutative property \cite{wy3655}:
\begin{equation}\label{l}
\nabla_{w} (Q_h w) = {\cal Q}_h^{(k-1)}( \nabla w), \qquad \forall  w\in H^1(T).
\end{equation}
In the finite element spaces $W_h$ and $M_h$, we introduce the following seminorms:
\begin{align}\label{norm1}
\3bar v \3bar_{W_h} =& s(v, v)^{\frac{1}{2}},  \quad v \in W_h;\\ 
\3bar \sigma  \3bar_{M_h} =&c(\sigma, \sigma)^{\frac{1}{2}},  \quad \sigma \in M_h.\label{norm2}
\end{align}

\begin{lemma}\label{infsup}
(Generalized inf-sup condition)  For any $\lambda \in M_h$, there exists a $v\in W_h^0$ satisfying
\begin{equation}\label{infsup1}
b(v, \lambda) \geq \left\{\begin{split}
\frac{1}{2} \|\lambda\|^2, \qquad &   l=k,\\
\frac{1}{2} \|\lambda\|^2- \beta h^{2}\|\nabla\lambda\|^2, \quad &   k=1, l=k-1, \\
\frac{1}{2} \|\lambda\|^2- \beta h^{4}| \lambda |_2^2, \quad &    k\geq 2, l=k-1,
\end{split}\right.
 \end{equation}
\end{lemma}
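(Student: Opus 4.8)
The plan is to prove the bound by a duality (Aubin--Nitsche--type) construction: given $\lambda\in M_h$, I take $v$ to be the $L^2$ projection $Q_h\Phi$ of the solution $\Phi$ of an adjoint convection-diffusion problem driven by $\lambda$, and show that $b(Q_h\Phi,\lambda)$ equals $\|\lambda\|^2$ up to a boundary-projection defect that is absorbed into $\tfrac12\|\lambda\|^2$ plus the stated remainder. The first technical step is to rewrite $b(\cdot,\lambda)$: using the definition \eqref{disgradient} of the discrete weak gradient with the test function $\bpsi=a\nabla\lambda\in[P_{k-1}(T)]^d$ (admissible since $a$ is piecewise constant) together with the symmetry of $a$, one obtains, for every $v=\{v_0,v_b,v_n\}\in W_h$,
\begin{equation*}
b(v,\lambda)=\sum_{T\in\T_h}\Big[(v_0,\,-\nabla\cdot(a\nabla\lambda)+\bb\cdot\nabla\lambda)_T+\langle v_b,\,(a\nabla\lambda)\cdot\bn\rangle_{\partial T}-\langle v_n,\,\lambda\rangle_{\partial T}\Big].
\end{equation*}
Note that $-\nabla\cdot(a\nabla\lambda)+\bb\cdot\nabla\lambda\in P_{k-1}(T)$ and $(a\nabla\lambda)\cdot\bn\in P_{k-1}(e)$ on each $e\subset\partial T$.

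Next, let $\Phi$ solve the auxiliary problem
\begin{equation*}
-\nabla\cdot(a\nabla\Phi)-\bb\cdot\nabla\Phi=\lambda\ \text{in}\ \Omega,\qquad \Phi=0\ \text{on}\ \Gamma_D,\qquad (a\nabla\Phi+\bb\Phi)\cdot\bn=0\ \text{on}\ \Gamma_N,
\end{equation*}
and set $v=Q_h\Phi=\{Q_0^{(k)}\Phi,\,Q_b^{(k)}\Phi,\,Q_n^{(l)}((a\nabla\Phi+\bb\Phi)\cdot\bn)\}$; the boundary conditions on $\Phi$ guarantee $v\in W_h^0$. Inserting this $v$ into the identity above, using that $Q_0^{(k)}$ and $Q_b^{(k)}$ act as the identity against polynomials of degree $\le k-1$, and integrating by parts twice in the volume term to move both derivatives onto $\Phi$, the two terms $\pm\langle\Phi,(a\nabla\lambda)\cdot\bn\rangle_{\partial T}$ cancel and (using $\nabla\cdot\bb=0$ on each $T$) I expect to arrive at
\begin{equation*}
b(Q_h\Phi,\lambda)=\big(-\nabla\cdot(a\nabla\Phi)-\bb\cdot\nabla\Phi,\ \lambda\big)_{\Omega}+\sum_{T\in\T_h}\langle(a\nabla\Phi+\bb\Phi)\cdot\bn,\ \lambda-Q_n^{(l)}\lambda\rangle_{\partial T}=\|\lambda\|^2+R,
\end{equation*}
where $R$ is the displayed boundary residual.

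It then remains to control $R$. If $l=k$, then $Q_n^{(k)}$ is the identity on $P_k(e)\ni\lambda|_e$, so $R=0$ and $b(v,\lambda)=\|\lambda\|^2\ge\tfrac12\|\lambda\|^2$, which is the first case. If $l=k-1$, I would write $\langle g,(I-Q_n^{(k-1)})\lambda\rangle_e=\langle(I-Q_n^{(k-1)})g,(I-Q_n^{(k-1)})\lambda\rangle_e$, apply Cauchy--Schwarz face by face, and combine the approximation property of $Q_n^{(k-1)}$ with trace and inverse inequalities on $\lambda$ and the elliptic regularity bound $\|\Phi\|_{2}\lesssim\|\lambda\|$ for the auxiliary problem, to get $|R|\lesssim h\,\|\nabla\lambda\|\,\|\lambda\|$ when $k=1$ and $|R|\lesssim h^{2}\,|\lambda|_{2}\,\|\lambda\|$ when $k\ge 2$. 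Young's inequality then absorbs $|R|$ into $\tfrac12\|\lambda\|^2+\beta h^{2}\|\nabla\lambda\|^2$ (resp. $\tfrac12\|\lambda\|^2+\beta h^{4}|\lambda|_2^2$), so that $b(v,\lambda)=\|\lambda\|^2+R\ge\tfrac12\|\lambda\|^2-\beta h^{2}\|\nabla\lambda\|^2$ (resp. $-\beta h^{4}|\lambda|_2^2$), completing the remaining two cases.

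The integration-by-parts bookkeeping and cancellations of the middle step are routine; the real obstacles are twofold. First, the well-posedness and $H^2$-regularity of the adjoint boundary value problem, which is needed both to make $Q_h\Phi$ meaningful and to estimate $R$, and which tacitly requires $\Gamma_D\neq\emptyset$ (otherwise constants lie in the kernel and no such $v$ can exist) together with a regularity hypothesis on the domain and coefficients. Second, in the $l=k-1$ case, tracking the precise powers of $h$ in the projection-error estimate so that the defect collapses to exactly $h^2\|\nabla\lambda\|^2$ for $k=1$ and $h^4|\lambda|_2^2$ for $k\ge 2$ --- the latter after invoking the inverse estimate $h^{2k}|\lambda|_k^2\lesssim h^4|\lambda|_2^2$ to reduce the higher-order derivatives arising for large $k$ down to second order.
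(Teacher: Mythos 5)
Your proposal is correct and follows essentially the same route as the paper: the same auxiliary problem (your non-divergence form $-\nabla\cdot(a\nabla\Phi)-\bb\cdot\nabla\Phi=\lambda$ coincides with the paper's $-\nabla\cdot(a\nabla w+\bb w)=\lambda$ once $\bb$ is elementwise constant), the same choice $v=Q_h\Phi$ under the same $H^2$-regularity hypothesis, the identical residual $\sum_{T}\langle (a\nabla\Phi+\bb\Phi)\cdot\bn,(I-Q_n^{(l)})\lambda\rangle_{\partial T}$, and the same trace/approximation/Young estimates in the three cases. The only cosmetic difference is that you reach the key identity by testing the weak gradient against $\bpsi=a\nabla\lambda$ and integrating by parts twice, whereas the paper invokes the commutativity $\nabla_w Q_h w={\cal Q}_h^{(k-1)}\nabla w$ and integrates by parts once.
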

for some constant $\beta>0$.
\begin{proof}
Consider the auxiliary problem of seeking $w$ such that
\begin{equation}\label{au1}
\begin{split}
-\nabla \cdot (a \nabla w+ \bb w)& =\lambda, \qquad \text{in}\ \Omega,\\
w&=0, \qquad \text{on}\ \Gamma_D,\\
(a\nabla w+\bb w) \cdot \bn&=0, \qquad \text{on}\ \Gamma_N.\\
\end{split}
\end{equation}
Assume that the auxiliary problem (\ref{au1}) has the  $ {\color{black} H^{2}}$-regularity property in the sense that there exists a constant $C$ satisfying
\begin{equation}\label{regu}
{\color{black}  \|w\|_{2}} \leq C\|\lambda\|.
\end{equation}
By taking $v=Q_hw=\{Q_0^{(k)}w, Q_b^{(k)} w, Q_n^{(l)} ((a\nabla w+\bb w)\cdot\bn)\}\in W_h^0$ in $b(v, \lambda)$, we have from (\ref{disgradient}) and the usual integration by parts that
\begin{equation}\label{b-term}
\begin{split}
&b(v, \lambda)= b(Q_hw, \lambda)\\
=&\sum_{T\in {\cal T}_h} (a \nabla_w Q_h w+\bb Q_0^{(k)} w, \nabla \lambda)_T- \langle Q_n^{(l)}((a\nabla w+\bb w)\cdot\bn), \lambda\rangle_{\partial T}\\
=&\sum_{T\in {\cal T}_h} (a {\cal Q}_h^{(k-1)}(\nabla w)+\bb Q_0^{(k)} w, \nabla \lambda)_T- \langle Q_n^{(l)}((a\nabla w+\bb w)\cdot\bn), \lambda\rangle_{\partial T}\\
=&\sum_{T\in {\cal T}_h} (a \nabla w+\bb w, \nabla \lambda)_T- \langle Q_n^{(l)}((a\nabla w+\bb w)\cdot\bn), \lambda\rangle_{\partial T}\\
=&\sum_{T\in {\cal T}_h}-(\nabla \cdot (a \nabla w +\bb w), \lambda)_T-\langle (Q_n^{(l)}-I)((a\nabla w+\bb w)\cdot\bn), \lambda\rangle_{\partial T} \\
=&\|\lambda\|^2-\sum_{T\in {\cal T}_h}\langle (Q_n^{(l)}-I)((a\nabla w+\bb w)\cdot\bn), (I-Q_n^{(l)})\lambda\rangle_{\partial T},
\end{split}
\end{equation}
where we have used the first equation of (\ref{au1}), (\ref{l}), and the property of the $L^2$ projection $Q_n^{(l)}$.

We shall discuss the estimate of the term  $\sum_{T\in {\cal T}_h}\langle (Q_n^{(l)}-I)((a\nabla w+\bb w)\cdot\bn), (I-Q_n^{(l)})\lambda\rangle_{\partial T}$ in various situations. 
For the case of $l=k$, we have
$$\sum_{T\in {\cal T}_h}\langle (Q_n^{(l)}-I)((a\nabla w+\bb w)\cdot\bn), (I-Q_n^{(l)})\lambda\rangle_{\partial T}=0,$$
which, together with \eqref{b-term}, gives (\ref{infsup1}) for the case of $l=k$. 
For the case of $l=k-1$, using the Cauchy-Schwarz inequality and the trace inequality \eqref{trace-inequality} gives
\begin{equation}\label{qnterm}
\begin{split}
&\left|\sum_{T\in {\cal T}_h}\langle (Q_n^{(l)}-I)((a\nabla w+\bb w)\cdot\bn), (I-Q_n^{(l)})\lambda\rangle_{\partial T}\right|\\
\leq & \Big(\sum_{T\in {\cal T}_h}\|(Q_n^{(l)}-I)((a\nabla w+\bb w)\cdot\bn)\|_{\pT}^2\Big)^{\frac{1}{2}} \Big(\sum_{T\in {\cal T}_h}\|(I-Q_n^{(l)})\lambda\|_{\pT}^2\Big)^{\frac{1}{2}}\\
\leq &C \Big(\sum_{T\in {\cal T}_h}h_T^{-1}\|(Q_0^{(l)}-I)((a\nabla w+\bb w))\|_{T}^2\\& +h_T\|(Q_0^{(l)}-I)(a\nabla w+\bb w)\|^2_{1,T} \Big)^{\frac{1}{2}}\\
& \Big(\sum_{T\in {\cal T}_h}h_T^{-1}\|(I-Q_0^{(l)})\lambda\|_{T}^2+h_T \|(I-Q_0^{(l)})\lambda\|_{1, T}^2\Big)^{\frac{1}{2}}\\
 \leq &
\left\{\begin{split}
  Ch\|\nabla\lambda\|  \|w\|_{2},\quad &  k=1, l=k-1,\\
 Ch^{2}|\lambda|_2 \|w\|_{2},\quad &  k\geq 2, l=k-1.
 \end{split}\right.
\end{split}
\end{equation}
Substituting (\ref{qnterm}) into (\ref{b-term}) and using  the Young's inequality and the $H^{2}$- regularity property \eqref{regu} gives
\begin{equation*}\label{qnterm-2}
\begin{split}
|b(v, \lambda)| 
& \geq \|\lambda\|^2-\epsilon \|w\|_{2}^2 -C\epsilon^{-1}\left\{\begin{split}
h^{2}\|\nabla\lambda\|^2 ,\quad &  k=1, l=k-1\\
h^{4}| \lambda |_2^2,\quad & k\geq 2, l=k-1
\end{split}\right.\\ \\
& \geq (1- \epsilon C) \|\lambda\|^2-C\epsilon^{-1}
\left\{\begin{split}
  h^{2}\|\nabla\lambda\|^2,\quad & k=1, l=k-1\\
h^{4}| \lambda |_2^2,\quad &  k\geq 2, l=k-1
\end{split}\right.\\
& \geq \frac{1}{2} \|\lambda\|^2-\beta  \left\{\begin{split}
 h^{2}\|\nabla\lambda\|^2,\quad & k=1, l=k-1,\\
h^{4}| \lambda |_2^2,\quad & k\geq 2, l=k-1,
\end{split}\right.
\end{split}
\end{equation*}
where   $\epsilon>0$ is a parameter  satisfying $1- \epsilon C \geq \frac{1}{2}$, and $\beta=C\epsilon^{-1} >0$.  
This completes the proof of (\ref{infsup1}) for the case of $l=k-1$ and further completes the proof of the lemma.
\end{proof}

\begin{theorem}\label{thmunique1} 
The primal-dual weak Galerkin algorithm (\ref{32})-(\ref{2}) has a unique solution.
\end{theorem}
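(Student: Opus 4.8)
Since the system (\ref{32})--(\ref{2}) is a square linear system (finite-dimensional), it suffices to show that the only solution of the homogeneous problem is the trivial one. The plan is thus to set $f=0$, $g_1=0$, $g_2=0$, so that $u_h\in W_h^0$, and to deduce $u_h=0$ and $\lambda_h=0$ from the two homogeneous equations
\begin{equation*}
s(u_h,v)+b(v,\lambda_h)=0\ \ \forall v\in W_h^0,\qquad -c(\lambda_h,\sigma)+b(u_h,\sigma)=0\ \ \forall \sigma\in M_h.
\end{equation*}
First I would take $v=u_h$ in the first equation and $\sigma=\lambda_h$ in the second, then subtract to cancel the $b(u_h,\lambda_h)$ cross term; this yields $s(u_h,u_h)+c(\lambda_h,\lambda_h)=0$, i.e. $\3bar u_h\3bar_{W_h}=0$ and $\3bar\lambda_h\3bar_{M_h}=0$. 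The vanishing of $s(u_h,u_h)$ forces $u_{0}=u_{b}$ on each $\partial T$ and $(a\nabla u_0+\bb u_0)\cdot\bn=u_n$ on each $\partial T$, and in particular $u_b=0$ on $\Gamma_D$, $u_n=0$ on $\Gamma_N$.

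Next I would feed this information back into the equation $b(u_h,\sigma)=c(\lambda_h,\sigma)$ for all $\sigma\in M_h$. Using $u_b=u_0$ and $u_n=(a\nabla u_0+\bb u_0)\cdot\bn$ together with the identity (\ref{disgradient*}) for the discrete weak gradient (note $u_0$ is polynomial, hence smooth on each $T$), one gets $\nabla_w u_h|_T=\nabla u_0|_T$ on each $T$, so that
\begin{equation*}
b(u_h,\sigma)=\sum_{T\in\T_h}(a\nabla u_0+\bb u_0,\nabla\sigma)_T-\langle(a\nabla u_0+\bb u_0)\cdot\bn,\sigma\rangle_{\partial T}=-\sum_{T\in\T_h}(\nabla\cdot(a\nabla u_0+\bb u_0),\sigma)_T,
\end{equation*}
after integration by parts on each element. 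Since $u_0\in P_k(T)$ and $a,\bb$ are piecewise constant, $\nabla\cdot(a\nabla u_0+\bb u_0)\in P_{k-1}(T)\subset M_h$; choosing $\sigma=\nabla\cdot(a\nabla u_0+\bb u_0)$ and recalling $c(\lambda_h,\lambda_h)=0$ hence (for the relevant parameter choices) $c(\lambda_h,\sigma)=0$, this gives $\nabla\cdot(a\nabla u_0+\bb u_0)=0$ on each $T$. Combined with the jump-free conditions already obtained ($u_0$ continuous across interior faces through $u_b$, zero Dirichlet data, and continuous normal flux through $u_n$ with zero Neumann data), $u_0$ is a global weak solution of the homogeneous convection-diffusion problem (\ref{model}), hence $u_0\equiv 0$ by well-posedness of (\ref{model}); then $u_b=u_n=0$ as well, so $u_h=0$.

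It remains to show $\lambda_h=0$. With $u_h=0$, the first homogeneous equation reduces to $b(v,\lambda_h)=0$ for all $v\in W_h^0$. Here I invoke Lemma \ref{infsup}: choosing the $v\in W_h^0$ it produces gives $0=b(v,\lambda_h)\ge \frac12\|\lambda_h\|^2 - \beta h^{2}\|\nabla\lambda_h\|^2$ (or the $h^4|\lambda_h|_2^2$ variant, or simply $\ge\frac12\|\lambda_h\|^2$ when $l=k$). In the case $l=k$ this immediately forces $\lambda_h=0$. In the cases $l=k-1$, I would additionally use $c(\lambda_h,\lambda_h)=0$: for $k=1$ this says $\tau_1\sum_T h_T^2\|\nabla\lambda_h\|_T^2=0$, so $\|\nabla\lambda_h\|=0$ and the inf-sup bound gives $\|\lambda_h\|=0$; for $k\ge 2$ it says $\tau_2\sum_T h_T^4\sum_{i,j}\|\partial_{ij}^2\lambda_h\|_T^2=0$, so $|\lambda_h|_2=0$ and again the inf-sup bound gives $\|\lambda_h\|=0$. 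In every case $\lambda_h=0$, which together with $u_h=0$ proves uniqueness, and hence existence, for the square system.

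The main obstacle I anticipate is the bookkeeping in the second paragraph: one must be careful that $s(u_h,u_h)=0$ genuinely delivers $u_n=(a\nabla u_0+\bb u_0)\cdot\bn$ \emph{exactly} on $\partial T$ (not merely its projection), which is needed to make the boundary term in $b(u_h,\sigma)$ collapse cleanly after integration by parts, and that the resulting element-wise divergence indeed lands in $M_h$ so that it is a legitimate test function — both of which hinge on $a,\bb$ being piecewise constant and on the degree choices for $M_h$. A secondary point is to make sure the parameter conventions in the Remark following Algorithm~2.1 are used consistently so that $c(\lambda_h,\lambda_h)=0$ yields exactly the seminorm information needed to match each branch of Lemma \ref{infsup}.
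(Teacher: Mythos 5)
Your proposal is correct and follows essentially the same route as the paper's proof: reduce to the homogeneous system, use $s(u_h,u_h)+c(\lambda_h,\lambda_h)=0$ to kill the jumps, test $b(u_h,\sigma)=0$ with $\sigma=\nabla\cdot(a\nabla u_0+\bb u_0)$ to invoke uniqueness of the PDE, and then apply Lemma \ref{infsup} together with $c(\lambda_h,\lambda_h)=0$ to conclude $\lambda_h=0$. The subtleties you flag (exact flux matching from the stabilizer, the divergence landing in $M_h$ via piecewise-constant $a,\bb$, and the case-by-case parameter conventions) are precisely the points the paper's argument relies on.
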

\begin{proof}
It sufficies to prove that the homogeneous problem of  (\ref{32})-(\ref{2}) has only trivial solution. To this end, we assume $f=0$, $g_1=0$ and $g_2=0$. By letting $v=u_h$ and $\sigma=\lambda_h$ in (\ref{32})-(\ref{2}), we have from the difference of (\ref{32})-(\ref{2}) that
$$
s(u_h,u_h)+c (\lambda_h, \lambda_h)=0,
$$
which implies $u_0=u_b$ and $(a\nabla u_0+\bb u_0)\cdot \bn=u_n$ on each $\partial T$; and $c (\lambda_h, \lambda_h)=0$. From $c (\lambda_h, \lambda_h)=0$ we have $\nabla \lambda_h=0$ on each element $T\in {\cal T}_h$ if $\tau_1>0$ and $\partial^2_{ij}\lambda_h=0$ for $i, j=1, \cdots, d$ on each element $T\in {\cal T}_h$ if $\tau_2>0$, which shows that $c(\lambda_h, \sigma)= 0$ for all $\sigma \in M_h$.

Using (\ref{2}), (\ref{disgradient*}) and the usual integration by parts, we have
\begin{equation*}
\begin{split}
0=&b(u_h, \sigma)\\
=&\sum_{T\in {\cal T}_h}(a\nabla_w u_h+\bb  u_0, \nabla \sigma)_T-\langle u_n, \sigma\rangle_{\partial T}\\
=&\sum_{T\in {\cal T}_h} (\nabla  u_0,   a \nabla \sigma)_T- \langle u_0-u_b, a \nabla \sigma   \cdot  \bn \rangle_{\partial T}-(\nabla \cdot (\bb u_0), \sigma)_T  \\
&+\langle \bb u_0\cdot \bn, \sigma \rangle_{\partial T} -\langle u_n, \sigma\rangle_{\partial T} \\ =&\sum_{T\in {\cal T}_h} -( \nabla \cdot( a\nabla  u_0),    \sigma)_T+ \langle  a \nabla u_0 \cdot  \bn , \sigma\rangle_{\partial T}- \langle u_0-u_b, a \nabla \sigma   \cdot  \bn \rangle_{\partial T}   \\
&-(\nabla \cdot (\bb u_0), \sigma)_T+\langle \bb u_0\cdot \bn, \sigma \rangle_{\partial T} -\langle u_n, \sigma\rangle_{\partial T} \\ =&\sum_{T\in {\cal T}_h}- ( \nabla \cdot(a \nabla u_0+\bb u_0), \sigma)_T  - \langle u_0-u_b, a \nabla \sigma   \cdot  \bn \rangle_{\partial T}\\
&+\langle (a\nabla u_0+\bb u_0)\cdot \bn-u_n, \sigma\rangle_{\partial T} \\
=&\sum_{T\in {\cal T}_h} -(\nabla \cdot (a\nabla u_0+\bb u_0),  \sigma)_T,\\
\end{split}
\end{equation*}
where we used $u_0=u_b$ and $(a\nabla u_0+\bb u_0)\cdot \bn=u_n$ on each $\partial T$.
This gives $\nabla \cdot (a\nabla u_0+\bb u_0)=0$ on each element $T\in {\cal T}_h$ by taking $\sigma=\nabla \cdot (a\nabla u_0+\bb u_0)$. From $(a\nabla u_0+\bb u_0)\cdot \bn=u_n$ on each $\partial T$, and $a\nabla u_0+\bb u_0 \in H(div; T)$, we obtain $a\nabla u_0+\bb u_0 \in H(div; \Omega)$ and further $\nabla \cdot (a\nabla u_0+\bb u_0)=0$ in $\Omega$. Using $g_1=0$ on $\Gamma_D$ and $u_0=u_b$ on each $\partial T$, gives $u_0=0$ on $\Gamma_D$. Using $g_2=0$ on $\Gamma_N$ and $(a\nabla u_0+\bb u_0)\cdot \bn=u_n$ on each $\partial T$, yields $(a\nabla u_0+\bb u_0)\cdot \bn=0$ on $\Gamma_N$. Therefore, from the solution uniqueness of the PDE problem, we have $u_0\equiv 0$ in $\Omega$. We further obtain $u_b \equiv 0$, $u_n \equiv 0$ and thus $u_h \equiv 0$ in $\Omega$.

From $u_h \equiv 0$ in $\Omega$,    (\ref{32}) can be simplified as follows 
$$b(v, \lambda_h)=0,\qquad  \forall v\in W_h^0.$$
From Lemma \ref{infsup}, there exists a $v\in W_h^0$, satisfying
\begin{equation}\label{inf}
0=b(v, \lambda_h)\geq  \left\{\begin{split}
\frac{1}{2} \|\lambda_h\|^2,\qquad & l=k,\\
\frac{1}{2} \|\lambda_h\|^2- \beta h^{2}\|\nabla\lambda_h\|^2,\quad & k=1, l=k-1, \\
\frac{1}{2} \|\lambda_h\|^2- \beta h^{4}| \lambda_h|_2^2,\quad & k\geq 2, l=k-1,
\end{split}\right.
\end{equation}
for some constant $\beta>0$.  For the case of $l=k$,  it follows from (\ref{inf}) that $\lambda_h \equiv 0$ in $\Omega$. Note that when $l=k-1$ and $k=1$, we take $\tau_1>0$ and $\tau_2=0$; when $l=k-1$ and $k\geq 2$, we take $\tau_1=0$ and $\tau_2>0$. Thus, for the case of $l=k-1$, using $c(\lambda_h, \lambda_h) \equiv 0$  gives $\nabla \lambda_h=0$ on each $T\in {\cal T}_h$ for $k=1$; and $\partial_{ij}^2 \lambda_h=0$ for any $i, j=1, \cdots, d$ on each $T\in {\cal T}_h$ for $k\geq 2$, which, combined with  \eqref{inf},  yields $\lambda_h\equiv 0$ in $\Omega$ for the case of $l=k-1$. This completes the proof of this theorem.
\end{proof}

\section{Mass Conservation} \label{Section:MC}
The first equation in the convection-diffusion model problem (\ref{model}) can be rewritten in a conservative form; i.e.,
\begin{eqnarray} \label{eq1-1}
-\nabla \cdot \textbf{F}&=&f, \\
\label{eq2}
 \textbf{F}&=&a\nabla u+\bb u.
\end{eqnarray}
On each element $T\in \T_h$,  integrating (\ref{eq1-1}) over $T$ gives the integral formulation of the mass conservation; i.e.,
\begin{equation}\label{mas}
-\int_{\partial T}\textbf{F} \cdot \bn ds =\int_T fdT.
\end{equation}

We claim that the numerical solution arising from the primal-dual weak Galerkin scheme (\ref{32})-(\ref{2}) for the convection-diffusion  model problem (\ref{model}) retains the mass conservation property (\ref{mas}) locally on each element $T\in {\cal T}_h$ with a numerical flux $\bF_h$. To this end, for any given element $T\in {\cal T}_h$, choosing the test function  $\sigma$  in (\ref{2})  such that $\sigma=1$ on $T$ and $\sigma=0$ elsewhere,  yields
\begin{equation*}
\begin{split}
&\quad -\tau_1 h_T^2 (\nabla \lambda_h, \nabla 1)_T-\tau_2 h_T^4\sum_{i, j=1}^d(\partial_{ij}^2 \lambda_h, \partial_{ij}^2 1)_T+(a\nabla_w u_h+\bb u_0, \nabla 1)_T-\langle u_n, 1\rangle_{\pT}\\
&=(f, 1)_T,
\end{split}
\end{equation*}
which can be simplified as follows
\begin{equation*}\label{eq1-3}
-\langle u_n \bn \cdot \bn, 1\rangle_{\pT}=(f, 1)_T.
\end{equation*}
This implies that the primal-dual weak Galerkin algorithm (\ref{32})-(\ref{2}) conserves mass with a numerical flux given by
$$
\textbf{F}_h|_\pT=  u_n  \bn.
$$
It is easy to check that
$$
\bF_h|_{\pT_1} \cdot \bn_{T_1} + \bF_h|_{\pT_2} \cdot \bn_{T_2} = 0,\quad \mbox{on} \ e=\pT_1\cap\pT_2,
$$
where $\bn_{T_1}$ and $\bn_{T_2}$ are the unit outward normal directions along the interior edge or flat face $e=\partial T_1\cap \partial T_2$ pointing exterior to $T_1$ and $T_2$, respectively. This indicates the continuity of the numerical flux $\bF_h$ along the normal direction on each interior edge or flat face $e\in {\cal E}_h^0$.

The result can be summarized as follows.

\begin{theorem}\label{THM:conservation} Let $(u_h=\{u_0, u_b, u_n\};\lambda_h)$ be the numerical solution of the convection-diffusion model problem \eqref{model} arising from the primal-dual weak Galerkin finite element method (\ref{32})-(\ref{2}). Define a numerical flux function as follows:
\begin{eqnarray*}
\textbf{F}_h|_\pT :=u_n  \bn, \quad\mbox{on } \pT, \ T\in \T_h.
\end{eqnarray*}
Then, the numerical flux approximation $\textbf{F}_h$ is continuous across each interior edge or flat face  $e\in {\cal E}_h^0$ in the normal direction, and satisfies the following mass conservation property; i.e.,
\begin{equation*}
-\int_{\partial T}\textbf{F}_h \cdot \bn ds =\int_T fdT.
\end{equation*}
\end{theorem}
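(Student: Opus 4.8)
The plan is to establish the two asserted properties of the numerical flux $\bF_h|_{\pT} := u_n\bn$ directly from the second equation \eqref{2} of the PDWG scheme, exactly along the lines already sketched in the text preceding the statement; I would simply organize that computation into a clean proof. First I would fix an arbitrary element $T\in\T_h$ and choose the test function $\sigma\in M_h$ in \eqref{2} to be the characteristic-type function of $T$, namely $\sigma\equiv 1$ on $T$ and $\sigma\equiv 0$ on every other element (this is admissible since constants on each element belong to $P_k(T)$ for $k\ge 1$, so $\sigma\in M_h$). With this choice, $\nabla\sigma=0$ and $\partial^2_{ij}\sigma=0$ on $T$, so the bilinear form $c(\lambda_h,\sigma)$ vanishes entirely, and in $b(u_h,\sigma)$ the volume term $(a\nabla_w u_h+\bb u_0,\nabla\sigma)_T$ also vanishes. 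What remains of \eqref{2} is $-\langle u_n,1\rangle_{\pT}=(f,1)_T$, i.e. $-\int_{\pT} u_n\,ds=\int_T f\,dT$. Since $\bn\cdot\bn=1$ on $\pT$, this reads $-\int_{\pT}\bF_h\cdot\bn\,ds=\int_T f\,dT$, which is precisely the claimed mass-conservation identity.

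Next I would verify the normal continuity of $\bF_h$ across an interior edge or face $e=\pT_1\cap\pT_2\in\E_h^0$. Here the key structural fact is the way the global space $W_h$ was built: the component $v_n$ is patched across $\E_h^0$ with a sign flip, i.e. the single-valued datum on $e$ is $v_n$ measured with the convention $\pm v_n$ relative to the two adjacent elements. Concretely, if $\bn_{T_1}$ and $\bn_{T_2}$ denote the unit outward normals to $T_1$ and $T_2$ on $e$, then $\bn_{T_1}=-\bn_{T_2}$, and the patching convention means $u_n|_{\pT_1}$ and $u_n|_{\pT_2}$ on $e$ are negatives of one another (each representing $(a\nabla u+\bb u)\cdot\bn$ with the respective outward normal). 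Therefore
\begin{equation*}
\bF_h|_{\pT_1}\cdot\bn_{T_1}+\bF_h|_{\pT_2}\cdot\bn_{T_2}
= u_n|_{\pT_1}(\bn_{T_1}\cdot\bn_{T_1}) + u_n|_{\pT_2}(\bn_{T_2}\cdot\bn_{T_2})
= u_n|_{\pT_1}+u_n|_{\pT_2}=0 \quad \text{on } e,
\end{equation*}
which is the stated normal continuity.

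I do not anticipate a serious obstacle: the proof is essentially a bookkeeping exercise in unwinding the definitions of $b(\cdot,\cdot)$, $c(\cdot,\cdot)$, and the space $W_h$. The one point that deserves care — and which I would state explicitly — is the sign/orientation convention in the patching of $v_n$ over $\E_h^0$, since the normal-continuity claim hinges entirely on interpreting ``$\pm v_n$ on the interior interface'' correctly; getting the two outward normals and the two traces of $u_n$ to match up with consistent signs is the only place an error could creep in. Everything else (admissibility of the test function, vanishing of $c$ and of the volume term in $b$) is immediate from $k\ge 1$ and the locality of the chosen $\sigma$.
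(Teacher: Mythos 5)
Your proposal is correct and follows essentially the same argument as the paper: testing \eqref{2} with $\sigma=1$ on a single element $T$ (and $0$ elsewhere) kills the $c(\cdot,\cdot)$ term and the volume term in $b(\cdot,\cdot)$, leaving exactly $-\langle u_n,1\rangle_{\pT}=(f,1)_T$, while normal continuity follows from the $\pm v_n$ patching convention defining $W_h$. Your explicit attention to the sign convention is a sound clarification of a point the paper treats only implicitly.
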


\section{Error Equations}
Let $u$ and $(u_h;\lambda_h) \in W_h\times M_h$ be the exact solution of (\ref{model}) and the PDWG solution arising from the numerical scheme (\ref{32})-(\ref{2}), respectively. Denote by ${\cal Q}^{(k)}_h$ the $L^2$ projection onto the finite element space $M_h$. Note that the exact solution of  the Lagrange multiplier $\lambda$ is $0$.
Define two error functions by
\begin{align}\label{error}
e_h&=u_h-Q_hu,
\\
\epsilon_h&=\lambda_h-{\cal Q}^{(k)}_h\lambda=\lambda_h.\label{error-2}
\end{align}

\begin{lemma}\label{errorequa}
The error functions $e_h$
and $\epsilon_h$ defined in (\ref{error})-(\ref{error-2}) satisfy the following   error equations for the primal-dual WG finite element scheme
(\ref{32})-(\ref{2}); i.e.,
\begin{eqnarray}\label{sehv}
s(e_h , v)+b(v, \epsilon_h)&=& -s(Q_hu, v), \qquad \forall v\in W^0_{h},\\
-c(\epsilon_h, \sigma)+b(e_h, \sigma)&=&\ell_u(\sigma),\qquad\qquad \forall \sigma\in M_h, \label{sehv2}
\end{eqnarray}
\end{lemma}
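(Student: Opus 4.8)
The plan is to derive the two error equations directly from the algorithm (\ref{32})-(\ref{2}) by substituting the decomposition $u_h = e_h + Q_h u$ and $\lambda_h = \epsilon_h$, and then replacing the exact-solution terms using the PDE and the weak formulation (\ref{weakform}). First I would record the key fact, established in the excerpt, that the exact Lagrange multiplier is $\lambda = 0$, so that $\epsilon_h = \lambda_h$ and $c(\lambda,\sigma) = 0$; this already shows the $c(\cdot,\cdot)$ term in (\ref{sehv2}) contributes only through $\epsilon_h$.

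For the first equation, I would start from (\ref{32}): $s(u_h,v) + b(v,\lambda_h) = 0$ for all $v\in W_h^0$. Writing $u_h = e_h + Q_h u$ and using the bilinearity of $s$ gives $s(e_h,v) + s(Q_h u, v) + b(v,\epsilon_h) = 0$, which is exactly (\ref{sehv}) after moving $s(Q_h u,v)$ to the right-hand side. This step is essentially immediate and requires no analytic estimate — only the definitions (\ref{error})-(\ref{error-2}) and linearity.

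For the second equation, the plan is to evaluate $b(e_h,\sigma) = b(u_h,\sigma) - b(Q_h u,\sigma)$. From (\ref{2}) we have $b(u_h,\sigma) = (f,\sigma) + c(\lambda_h,\sigma) = (f,\sigma) + c(\epsilon_h,\sigma)$. The substantive part is then to compute $b(Q_h u,\sigma)$ for the \emph{exact} solution $u$ and show it equals $(f,\sigma)$ plus a functional $\ell_u(\sigma)$ built out of projection errors. Here I would expand $b(Q_h u,\sigma) = \sum_T (a\nabla_w Q_h u + \bb\, Q_0^{(k)}u, \nabla\sigma)_T - \langle Q_n^{(l)}((a\nabla u + \bb u)\cdot\bn), \sigma\rangle_{\partial T}$, use the commutativity property (\ref{l}) to replace $\nabla_w Q_h u$ by ${\cal Q}_h^{(k-1)}(\nabla u)$, and then use the $L^2$-projection properties against the polynomial $\nabla\sigma \in [P_{k-1}(T)]^d$ to recover $(a\nabla u + \bb u, \nabla\sigma)_T$ up to the term $(\bb(Q_0^{(k)}u - u),\nabla\sigma)_T$. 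An integration by parts on $(a\nabla u + \bb u,\nabla\sigma)_T$, combined with the PDE $-\nabla\cdot(a\nabla u + \bb u) = f$ and the matching of the flux term $\langle(a\nabla u + \bb u)\cdot\bn, \sigma\rangle_{\partial T}$ against $\langle Q_n^{(l)}((a\nabla u+\bb u)\cdot\bn),\sigma\rangle_{\partial T}$, then yields $(f,\sigma)$ plus the residual functional $\ell_u(\sigma)$ collecting the projection-error terms. Combining, $b(e_h,\sigma) = (f,\sigma) + c(\epsilon_h,\sigma) - (f,\sigma) - \ell_u(\sigma)$ rearranges to (\ref{sehv2}).

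The main obstacle — really a bookkeeping obstacle rather than a deep one — is the careful handling of the $\partial T$ flux term and the $\bb u$ convection term when passing from $Q_h u$ back to $u$: one must keep track of which projections ($Q_0^{(k)}$, $Q_n^{(l)}$, ${\cal Q}_h^{(k-1)}$) are orthogonal to which polynomial spaces, and verify that the leftover pieces genuinely assemble into a single linear functional $\ell_u(\sigma)$ depending only on $u$ and $\sigma$ (and not on $e_h$ or $\epsilon_h$). Since the paper presumably intends $\ell_u(\sigma)$ to be defined precisely by the expression this computation produces, I would simply display that expression as the definition of $\ell_u$ and note that it vanishes when $u$ is a polynomial of appropriate degree, which is what the later error estimates will exploit.
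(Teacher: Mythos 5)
Your proposal is correct and follows essentially the same route as the paper: (\ref{sehv}) is immediate from the bilinearity of $s$ and the fact $\lambda=0$, and (\ref{sehv2}) is obtained by computing $b(Q_hu,\sigma)$ via the commutativity property (\ref{l}), integration by parts, and the PDE, exactly as you describe. The one point you leave slightly open --- the leftover convection term $(\bb\,(Q_0^{(k)}u-u),\nabla\sigma)_T$ --- in fact vanishes, since $\bb$ is (assumed) piecewise constant and $\nabla\sigma\in[P_{k-1}(T)]^d$ so that $\bb\cdot\nabla\sigma\in P_k(T)$ and $Q_0^{(k)}$ is the $L^2$ projection onto $P_k(T)$; hence $\ell_u$ consists solely of the boundary residual $\sum_{T\in{\cal T}_h}\langle(Q_n^{(l)}-I)((a\nabla u+\bb u)\cdot\bn),\sigma\rangle_{\partial T}$, which vanishes when $l=k$, in agreement with (\ref{lu}).
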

where \begin{equation}\label{lu}
\qquad \ell_u(\sigma) =
\left\{\begin{split}
0,\quad &l=k,\\
\sum_{T\in {\cal T}_h} \langle (Q_n^{(l)}-I)((a\nabla u+\bb u)\cdot \bn), \sigma \rangle_{\partial T},\quad &l=k-1.
\end{split}\right.
\end{equation}

\begin{proof}
Note that the exact solution of the Lagrange multiplier $\lambda$ is $0$. Subtracting $s(Q_hu, v)$ from both sides of (\ref{32}) yields
\begin{equation*}
\begin{split}
&s(u_h-Q_hu, v)+b(v, \lambda_h-{\cal Q}^{(k)}_h\lambda)=-s(Q_hu, v),  \qquad \forall v\in W^0_h.
\end{split}
\end{equation*}
This completes the proof of \eqref{sehv}. Next, for any $\sigma\in M_h$, we have
\begin{equation*}
\begin{split}
b(Q_h u, \sigma)= &\sum_{T\in {\cal T}_h} (a\nabla_w Q_h u+\bb Q_0^{(k)} u, \nabla \sigma)
_T- \langle Q_n^{(l)}((a\nabla u+\bb u)\cdot \bn), \sigma \rangle_{\partial T}\\
= &\sum_{T\in {\cal T}_h} (a {\cal Q}_h^{(k-1)} \nabla u+\bb Q_0^{(k)} u, \nabla \sigma)
_T- \langle Q_n^{(l)}((a\nabla u+\bb u)\cdot \bn), \sigma \rangle_{\partial T}\\
= &\sum_{T\in {\cal T}_h} (a   \nabla u+\bb  u, \nabla \sigma)
_T- \langle Q_n^{(l)}((a\nabla u+\bb u)\cdot \bn), \sigma \rangle_{\partial T}\\
= &\sum_{T\in {\cal T}_h} -(\nabla  \cdot (a\nabla u+\bb  u),  \sigma)_T +\langle  (a\nabla u+\bb u)\cdot \bn, \sigma \rangle_{\partial T}  \\
& - \langle Q_n^{(l)}((a\nabla u+\bb u)\cdot \bn), \sigma \rangle_{\partial T}\\
 = &\sum_{T\in {\cal T}_h}  (f,  \sigma)_T- \sum_{T\in {\cal T}_h}\langle (Q_n^{(l)}-I)((a\nabla u+\bb u)\cdot \bn), \sigma \rangle_{\partial T},\end{split}
\end{equation*}
where we have used the operator identify (\ref{l}), the usual integration by parts, and the first equation of (\ref{model}). Note that for the case of $l=k$, we have $\sum_{T\in {\cal T}_h}\langle (Q_n^{(l)}-I)((a\nabla u+\bb u)\cdot \bn), \sigma \rangle_{\partial T}=0$. Combining the above with (\ref{2}) yields (\ref{sehv2}). This completes the proof of the lemma.
\end{proof}

\section{Residual Error Estimates}
Recall that $\T_h$ is a shape-regular finite element partition of
the domain $\Omega$. For any $T\in\T_h$ and $\varphi\in H^1(T)$, the
following trace inequality holds true \cite{wy3655}:
\begin{equation}\label{trace-inequality}
\|\varphi\|_{\pT}^2 \leq C
(h_T^{-1}\|\varphi\|_{T}^2+h_T\|\nabla\varphi\|_{T}^2).
\end{equation}
If $\varphi$ is a polynomial on the element $T\in \T_h$, then from
the inverse inequality (see also \cite{wy3655}) we have
\begin{equation}\label{x}
\|\varphi\|_{\pT}^2 \leq C h_T^{-1}\|\varphi\|_{T}^2.
\end{equation}

\begin{lemma}\label{Lemma5.2}\cite{wy3655}  Let ${\cal T}_h$ be a
finite element partition of the domain $\Omega$ satisfying the shape regularity
assumptions given in \cite{wy3655}. Then, for any $0\leq p\leq 2$, $1\leq m\leq k$, one has
\begin{eqnarray}\label{3.2}
\sum_{T\in {\cal T}_h}h_T^{2p}\|u-Q_0^{(m)}u\|^2_{p,T} &\leq &
Ch^{2(m+1)}\|u\|_{m+1}^2,\\
  \sum_{T\in {\cal
T}_h} h_T^{2p}\|\nabla u-{\cal Q}^{(m-1)}_h \nabla u\|^2_{p,T} &\leq& Ch^{2m}\|u\|_{m+1}^2, 
 \label{3.3} \\
 \sum_{T\in {\cal T}_h}h_T^{2p}\|u-{\cal Q}^{(m)}_hu\|^2_{p,T} &\leq &
Ch^{2(m+1)}\|u\|_{m+1}^2. \label{3.3-2}
\end{eqnarray}
\end{lemma}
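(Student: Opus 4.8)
The plan is to establish all three estimates by the standard localization-plus-scaling argument, since each one concerns an element-wise $L^2$ projection and the right-hand sides are already sums over $T\in\T_h$. First I would fix a single $T\in\T_h$ and exploit that $Q_0^{(m)}$ (and likewise $\mathcal Q_h^{(m)}$, $\mathcal Q_h^{(m-1)}$ restricted to $T$) reproduces polynomials: for any $q\in P_m(T)$ one has $u-Q_0^{(m)}u=(u-q)-Q_0^{(m)}(u-q)$. Splitting the seminorm and using the $L^2$-stability of the projection together with the inverse inequality \eqref{x} applied to the polynomial $Q_0^{(m)}(u-q)$ gives
\begin{equation*}
\|u-Q_0^{(m)}u\|_{p,T}\le \|u-q\|_{p,T}+Ch_T^{-p}\|u-q\|_{T},\qquad 0\le p\le 2,
\end{equation*}
so that everything is reduced to bounding $\|u-q\|_{s,T}$ for a cleverly chosen $q$ and for the two values $s=p$ and $s=0$.

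Next I would invoke the Bramble--Hilbert lemma on a reference element: mapping $T$ affinely onto a fixed reference cell $\hat T$, for every $\hat u$ there is $\hat q\in P_m(\hat T)$ with $\|\hat u-\hat q\|_{s,\hat T}\le C|\hat u|_{m+1,\hat T}$ for all admissible $s$ (in particular for $0\le s\le 2\le m+1$). Pulling back via the affine map, using shape regularity of $\T_h$ to keep the Jacobian and the constants uniform, and invoking the scaling $|v|_{s,T}\approx h_T^{d/2-s}|\hat v|_{s,\hat T}$, one obtains $\|u-q\|_{s,T}\le Ch_T^{m+1-s}|u|_{m+1,T}$. Feeding $s=p$ and $s=0$ into the displayed bound yields $\|u-Q_0^{(m)}u\|_{p,T}\le Ch_T^{m+1-p}|u|_{m+1,T}$, hence $h_T^{2p}\|u-Q_0^{(m)}u\|_{p,T}^2\le Ch_T^{2(m+1)}|u|_{m+1,T}^2\le Ch^{2(m+1)}|u|_{m+1,T}^2$; summing over $T$ and using $\sum_T|u|_{m+1,T}^2\le\|u\|_{m+1}^2$ proves \eqref{3.2}, and the identical argument gives \eqref{3.3-2}. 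For \eqref{3.3}, I would run the same reasoning componentwise on $\nabla u$ with the projection onto $P_{m-1}$: Bramble--Hilbert now produces $\|\nabla u-q\|_{s,T}\le Ch_T^{m-s}|\nabla u|_{m,T}\le Ch_T^{m-s}|u|_{m+1,T}$, so $h_T^{2p}\|\nabla u-\mathcal Q_h^{(m-1)}\nabla u\|_{p,T}^2\le Ch_T^{2m}|u|_{m+1,T}^2$, and summation over $\T_h$ gives \eqref{3.3}.

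The genuinely delicate points here are bookkeeping rather than conceptual. The first is that $Q_0^{(m)}$ and $\mathcal Q_h^{(m-1)}$ are $L^2$ projections, not nodal interpolants, so their stability in the $H^p$ seminorm for $p\ge 1$ is available only after trading $p$ derivatives for $h_T^{-p}$ through the inverse inequality \eqref{x} (legitimate precisely because the projections return polynomials); missing this step is the usual pitfall. The second is that the power of $h_T$ generated by the reference-to-physical scaling of $|\cdot|_{p,T}$ must be tracked together with the explicit factor $h_T^{2p}$, and shape regularity is what makes the scaling constants uniform over the partition. Once these are handled, the global factors $h^{2(m+1)}$ (resp.\ $h^{2m}$) drop out simply by replacing each $h_T$ with the mesh size $h$, and the remaining arguments are routine.
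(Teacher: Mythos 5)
The paper offers no proof of this lemma at all: it is quoted verbatim from \cite{wy3655}, so there is nothing internal to compare against. Your argument is the standard one underlying the cited result---split off a best-approximating polynomial, use $L^2$-stability of the projection plus an inverse estimate to control the projected remainder in the $H^p$ seminorm, and conclude with Bramble--Hilbert and scaling---and it is sound in its main thrust. Three bookkeeping points deserve correction, though. First, \eqref{x} is the trace inequality $\|\varphi\|_{\partial T}^2\le Ch_T^{-1}\|\varphi\|_T^2$ for polynomials, not the Sobolev inverse inequality $|\varphi|_{p,T}\le Ch_T^{-p}\|\varphi\|_T$ that your argument actually needs; the latter holds on shape-regular partitions and is also proved in \cite{wy3655}, but you should invoke it explicitly rather than cite \eqref{x}. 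Second, the elements here are general polygons/polyhedra, so they are not affine images of a single reference cell $\hat T$; the ``map to $\hat T$ and scale'' step has to be replaced by the shape-regularity machinery of \cite{wy3655} (each element is star-shaped with respect to a ball of radius comparable to $h_T$, so Bramble--Hilbert applies on $T$ directly with constants depending only on the chunkiness parameter). Third, your own derivation exposes a limitation at the endpoint of the stated range: for \eqref{3.3} the Bramble--Hilbert bound $\|\nabla u-q\|_{s,T}\le Ch_T^{m-s}|u|_{m+1,T}$ with $q\in [P_{m-1}(T)]^d$ requires $s\le m$, so the case $p=2$, $m=1$ is not reachable (indeed $\|\nabla u-q\|_{2,T}$ involves third derivatives of $u$, which $\|u\|_{2}$ does not control). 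This is a defect of the quoted statement rather than of your idea---the paper only ever applies these estimates with effective $p\le 1$---but a careful write-up should restrict to $p\le m$ in \eqref{3.3} (and $p\le m+1$ in \eqref{3.2} and \eqref{3.3-2}, which your range $p\le 2\le m+1$ already satisfies).
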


\begin{theorem} \label{theoestimate} 
Let $u$ and $(u_h;\lambda_h) \in
W_{h}\times M_{h}$ be the exact solution of (\ref{model}) and PDWG solution of
(\ref{32})-(\ref{2}), respectively. Assume that the exact solution
$u$ of (\ref{model}) is sufficiently regular such that $u\in
H^{k+1}(\Omega)$. Then, there exists a constant $C$ such that the following error estimate holds true:
\begin{equation}\label{erres}
\3bar u_h-Q_h u \3bar_{W_h}+\3bar \lambda_h-{\cal Q}^{(k)}_h \lambda\3bar_{M_h} \leq
 \left\{\begin{split}
C h^{k-1}\|u\|_{k+1},   & \  l=k,\\
C(1+\tau_1^{-\frac{1}{2}})h^{k-1}\|u\|_{k+1}, & \ k=1, l=k-1, \\
C(1+\tau_2^{-\frac{1}{2}}) h^{k-1}\|u\|_{k+1},  & \ k\geq 2, l=k-1.
\end{split}\right.
\end{equation}
\end{theorem}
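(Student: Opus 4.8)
The plan is to prove the error estimate \eqref{erres} by combining the error equations of Lemma~\ref{errorequa} with the generalized inf-sup condition of Lemma~\ref{infsup}, together with the approximation properties of the $L^2$ projections collected in Lemma~\ref{Lemma5.2}. First I would estimate the right-hand sides of the error equations \eqref{sehv}--\eqref{sehv2}. For \eqref{sehv}, the term $s(Q_hu,v)$ is bounded by $\3bar Q_hu-Q_h^{\rm{true}}u\3bar$... more precisely one writes $|s(Q_hu,v)|\le \3bar Q_hu \3bar_{W_h}\,\3bar v\3bar_{W_h}$ after noting $s(Q_hu,Q_hu)$ measures $h_T^{-3}\|Q_0^{(k)}u-Q_b^{(k)}u\|_{\partial T}^2$ plus $h_T^{-1}\|(a\nabla Q_0^{(k)}u+\bb Q_0^{(k)}u)\cdot\bn-Q_n^{(l)}((a\nabla u+\bb u)\cdot\bn)\|_{\partial T}^2$; using the trace/inverse inequalities \eqref{trace-inequality}--\eqref{x} and \eqref{3.2} this is $O(h^{k-1}\|u\|_{k+1})$ (the worst power $h^{-3}$ against an $O(h^{k+1})$ approximation, with a trace factor $h^{-1}$, gives $h^{k-1}$; one checks the second group gives the same order). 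For \eqref{sehv2}, the functional $\ell_u(\sigma)$ vanishes when $l=k$; when $l=k-1$ one estimates $|\ell_u(\sigma)|$ exactly as in \eqref{qnterm}: by Cauchy--Schwarz and the trace inequality, $|\ell_u(\sigma)|\le C h^{k}\|u\|_{k+1}\bigl(\sum_T h_T^{-1}\|(I-Q_n^{(l)})\sigma\|_{\partial T}^2\bigr)^{1/2}$, and since $\sigma\in M_h$ the oscillation $\|(I-Q_n^{(l)})\sigma\|_{\partial T}$ is controlled by $h_T\|\nabla\sigma\|_{T}$ (for $k=1$) or $h_T^2\|\nabla^2\sigma\|_{T}$ (for $k\ge 2$), so $|\ell_u(\sigma)|\le C h^{k-1}\|u\|_{k+1}\cdot\tau_j^{-1/2}\3bar\sigma\3bar_{M_h}$ after absorbing the extra $h$ into the $c(\cdot,\cdot)$-weights $\tau_1 h_T^2$ or $\tau_2 h_T^4$.

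Second, I would take $v=e_h$ in \eqref{sehv} and $\sigma=\epsilon_h$ in \eqref{sehv2} and subtract, obtaining the energy identity
\begin{equation*}
s(e_h,e_h)+c(\epsilon_h,\epsilon_h) = -s(Q_hu,e_h) - \ell_u(\epsilon_h),
\end{equation*}
i.e. $\3bar e_h\3bar_{W_h}^2+\3bar\epsilon_h\3bar_{M_h}^2$ is bounded by the two right-hand terms just estimated. Applying Young's inequality to absorb $\3bar e_h\3bar_{W_h}$ and $\3bar\epsilon_h\3bar_{M_h}$ on the left, this already yields
\begin{equation*}
\3bar e_h\3bar_{W_h}+\3bar\epsilon_h\3bar_{M_h}\le C\bigl(1+\tau_j^{-1/2}\bigr)h^{k-1}\|u\|_{k+1},
\end{equation*}
with the $\tau_j$-factor present only in the $l=k-1$ cases and absent when $l=k$ (where $\ell_u\equiv0$ and $c$ may be taken zero). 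This is essentially the claimed bound; the only remaining point is that $\3bar\cdot\3bar_{M_h}$ is merely a seminorm, but \eqref{erres} is stated in exactly that seminorm, so no further control of $\|\lambda_h\|$ itself is needed here.

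The step I expect to be the main obstacle is the careful bookkeeping of mesh powers in bounding $\3bar Q_hu\3bar_{W_h}$: one must verify that the $h_T^{-3}$ and $h_T^{-1}$ weights in $s(\cdot,\cdot)$, combined with the trace inequality \eqref{trace-inequality} (which itself costs a factor $h_T^{-1}$ going from $\partial T$ to $T$), interact with the $O(h^{k+1})$ bound in \eqref{3.2} and the $O(h^{k})$ bound in \eqref{3.3} to produce \emph{exactly} $h^{k-1}$ and not a worse power — in particular the second group of $s$, involving $(a\nabla Q_0^{(k)}u+\bb Q_0^{(k)}u)\cdot\bn - Q_n^{(l)}((a\nabla u+\bb u)\cdot\bn)$, must be split as $(a\nabla Q_0^{(k)}u - a\nabla u)\cdot\bn + (I-Q_n^{(l)})((a\nabla u+\bb u)\cdot\bn) + \bb(Q_0^{(k)}u-u)\cdot\bn$ and each piece estimated separately, using that $a,\bb$ are piecewise constant. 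A secondary subtlety is making sure the $\tau_j^{-1/2}$ dependence is tracked honestly when extracting $\3bar\epsilon_h\3bar_{M_h}$ from the $\ell_u$ term, since one divides by $\tau_j^{1/2}$ to convert $h_T\|\nabla\epsilon_h\|_T$ (resp. $h_T^2\|\nabla^2\epsilon_h\|_T$) into the $c$-norm; everything else is a routine application of Cauchy--Schwarz and Young's inequality.
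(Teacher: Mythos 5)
Your proposal is correct and follows essentially the same route as the paper: the same energy identity $s(e_h,e_h)+c(\epsilon_h,\epsilon_h)=-s(Q_hu,e_h)-\ell_u(\epsilon_h)$, the same $O(h^{k-1})$ bound on $\3bar Q_hu\3bar_{W_h}$ via the trace inequality and \eqref{3.2}, and the same orthogonality trick for $\ell_u$ (the paper inserts an element-wise interpolant $I_h^l\epsilon_h$ where you use $(I-Q_n^{(l)})\sigma$ on each edge, but these are interchangeable), yielding the identical $\tau_j^{-1/2}$ factors. The only nit is that the oscillation bound on an edge should read $\|(I-Q_n^{(l)})\sigma\|_{\partial T}\lesssim h_T^{1/2}\|\nabla\sigma\|_T$ rather than $h_T\|\nabla\sigma\|_T$, but this cancels against your $h_T^{-1/2}$ weight and does not change the final estimate.
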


\begin{proof} By choosing $v=e_h$ and $\sigma=\epsilon_h$ in (\ref{sehv})-(\ref{sehv2}), we have from the difference of (\ref{sehv}) and (\ref{sehv2}) that
\begin{equation}\label{ess2}
s(e_h, e_h)+c (\epsilon_h, \epsilon_h)=-s(Q_h u, e_h)-\ell_u(\epsilon_h).
\end{equation}

Recall that
\begin{equation}\label{EQ:September:03:100}
\begin{split}
&s(Q_h u, e_h)\\
= &\sum_{T\in {\cal T}_h}h_T^{-3}\langle Q_0^{(k)}u-Q_b^{(k)}u,
e_0-e_b\rangle_\pT+\sum_{T\in {\cal T}_h}h_T^{-1} \langle (a\nabla Q_0^{(k)}u+\bb  Q_0^{(k)}u) \cdot \bn \\ & -Q_n^{(l)}((a\nabla u+\bb u)  \cdot \bn), (a\nabla e_0+\bb e_0) \cdot \bn - e_n\rangle_\pT.
\end{split}
\end{equation}
The first term on the right-hand side of (\ref{EQ:September:03:100})
can be estimated by using the Cauchy-Schwarz inequality, the trace
inequality (\ref{trace-inequality}), and the estimate (\ref{3.2}) with $m=k$ as follows
\begin{equation}\label{s1}
\begin{split}
&\left| \sum_{T\in {\cal T}_h}h_T^{-3}\langle Q_0^{(k)}u-Q_b^{(k)}u, e_0-e_b\rangle_\pT\right|\\
= &\left| \sum_{T\in {\cal T}_h}h_T^{-3}\langle Q_0^{(k)}u- u, e_0-e_b\rangle_\pT\right|\\
 \leq & \Big(\sum_{T\in {\cal T}_h}h_T^{-3}\|u-Q_0^{(k)}u\|^2_{\partial
T}\Big)^{\frac{1}{2}} \Big(\sum_{T\in {\cal T}_h}
h_T^{-3}\|e_0-e_b\|^2_{\partial T}\Big)^{\frac{1}{2}}\\
\leq & C\Big(\sum_{T\in {\cal
T}_h}h_T^{-4} \|u-Q_0^{(k)}u\|_T^2+h_T^{-2}|u-Q_0^{(k)}u|_{1,T}^2
\Big)^{\frac{1}{2}}\3bar e_h \3bar_{W_h}\\
\leq & Ch^{k-1}\|u\|_{k+1}\3bar e_h \3bar_{W_h}.
\end{split}
\end{equation}
Similarly, the second term on the right-hand side of
(\ref{EQ:September:03:100}) has the following estimate
\begin{equation}\label{s2}
\begin{split}
&\qquad \Big|\sum_{T\in {\cal T}_h}h_T^{-1} \langle (a\nabla Q_0^{(k)}u+\bb  Q_0^{(k)}u) \cdot \bn  -Q_n^{(l)}((a\nabla u+\bb u) \cdot \bn), \\&(a\nabla e_0+\bb e_0) \cdot \bn  -e_n\rangle_\pT
\Big|
\leq Ch^{k-1}\|u\|_{k+1} \3bar e_h \3bar_{W_h}.
\end{split}
\end{equation}
Substituting (\ref{s1}) and (\ref{s2}) into (\ref{EQ:September:03:100})  
gives
\begin{equation}\label{EQ:True:01}
|s(Q_h u, e_h)| \leq Ch^{k-1}\|u\|_{k+1}\3bar e_h \3bar_{W_h}.
\end{equation}

We shall further discuss the second term on the right-hand side of (\ref{ess2}).
For the case of $l=k$, from (\ref{lu}), we have
\begin{equation}\label{te1}
\ell_u(\epsilon_h)=0.
\end{equation}
{\color{black} We now consider the case of $l=k-1$.  By denoting 
\[
   F_u=a\nabla u+\bb u, 
\]
and then using (\ref{lu}), the Cauchy-Schwarz inequality, the trace
inequality (\ref{trace-inequality}),  and the estimate (\ref{3.2}) with $m=l=k-1$, we have
\begin{equation}\label{aij}
\begin{split}
|\ell_u(\epsilon_h) | = & |\ell_u(\epsilon_h-I_h^l{\epsilon_h}) | 
= \left| \sum_{T\in {\cal T}_h}\langle (Q_n^{(l)}-I)(F_u\cdot\bn), \epsilon_h-I_h^l{\epsilon_h}\rangle_{\partial T}\right|\\
  \leq & \Big( \sum_{T\in {\cal T}_h} \|(Q_n^{(l)}-I)(F_u\cdot\bn)\|_{\pT}^2\Big)^{\frac{1}{2}}\Big( \sum_{T\in {\cal T}_h} \|  \epsilon_h-I_h^l{\epsilon_h}\|_{\pT}^2\Big)^{\frac{1}{2}} \\
\leq & C\Big(\sum_{T\in {\cal T}_h} h_T^{-1}\|(Q_0^{(l)}-I)F_u\|_{T}^2+h_T |(Q_0^{(l)}-I)F_u|_{1, T}^2\Big)^{\frac{1}{2}} \\
 & \ 
\Big( \sum_{T\in {\cal T}_h} h_T^{-1}\|\epsilon_h-I_h^l{\epsilon_h}\|_{T}^2+h_T\| \nabla(\epsilon_h-I_h^l{\epsilon_h})\|_{T}^2\Big)^{\frac{1}{2}} \\
\leq & Ch^{l} \|F_u\|_{l+1} \Big( \sum_{T\in {\cal T}_h} \|\epsilon_h-I_h^l{\epsilon_h}\|_{T}^2+h^2_T\| \nabla(\epsilon_h-I_h^l{\epsilon_h})\|_{T}^2\Big)^{\frac{1}{2}}, 
\end{split}
\end{equation}
where $I_h^l {\epsilon_h}$ denotes the cell average and linear interpolation of $\epsilon_h$
 on each element $T\in {\cal T}_h$ for  $l=0$ and $l\ge 1$, resepctively. 
  Choosing $l=k-1$ in the above inequality and using the approximation property of the  interpolation function yields }
\begin{equation}\label{aij-3}
|\ell_u(\epsilon_h) | \leq
 \left\{\begin{split}
C\tau_1^{-\frac{1}{2}} h^{k-1}\|u\|_{k+1} \3bar\epsilon_h\3bar_{M_h},\quad & k=1, l=k-1,
 \\
C\tau_2^{-\frac{1}{2}} h^{k-1}\|u\|_{k+1} \3bar\epsilon_h\3bar_{M_h},\quad &  k\geq 2, l=k-1.
  \end{split}\right.
\end{equation}

Substituting  (\ref{EQ:True:01}), (\ref{te1}),  and (\ref{aij-3}) into (\ref{ess2}) gives the error estimate (\ref{erres}). This completes the proof of the theorem.
\end{proof}

\begin{theorem} Under the assumption of Theorem \ref{theoestimate}, there exists a constant $C$ such that the following error estimate holds true:
\begin{equation}\label{rese0}
 \Big(\sum_{T\in {\cal T}_h} \|\nabla\cdot(a\nabla e_0 + \bb e_0)\|^2_T \Big)^{\frac{1}{2}}\\
 \leq
 \left\{\begin{split}
C h^{k-1}\|u\|_{k+1},   & \ l=k,\\
C(1+\tau_1^{\frac{1}{2}})(1+\tau_1^{-\frac{1}{2}})h^{k-1}\|u\|_{k+1}, & \ k=1, l=k-1, \\
C(1+\tau_2^{\frac{1}{2}})(1+\tau_2^{-\frac{1}{2}}) h^{k-1}\|u\|_{k+1},  & \ k\geq 2, l=k-1.
\end{split}\right.
\end{equation}
\end{theorem}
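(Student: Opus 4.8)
The plan is to bound $\sum_{T}\|\nabla\cdot(a\nabla e_0+\bb e_0)\|_T^2$ by testing the error equation \eqref{sehv2} against the specific piecewise polynomial $\sigma$ obtained from the residual itself. Concretely, on each $T$ set $\sigma|_T := \nabla\cdot(a\nabla e_0+\bb e_0)$, which lies in $P_{k-1}(T)\subset P_k(T)$ since $e_0\in P_k(T)$ and $a,\bb$ are piecewise constants; hence $\sigma\in M_h$ is an admissible test function. The first step is to run the same integration-by-parts computation that appears in the proof of Theorem \ref{thmunique1}, but now keeping the nonzero interior/boundary mismatch terms: starting from $b(e_h,\sigma)$, using \eqref{disgradient*} and integrating by parts elementwise, one gets
\begin{equation*}
b(e_h,\sigma)=\sum_{T\in\T_h}\Big[-(\nabla\cdot(a\nabla e_0+\bb e_0),\sigma)_T-\langle e_0-e_b, a\nabla\sigma\cdot\bn\rangle_{\partial T}+\langle (a\nabla e_0+\bb e_0)\cdot\bn-e_n,\sigma\rangle_{\partial T}\Big].
\end{equation*}
Substituting $\sigma=\nabla\cdot(a\nabla e_0+\bb e_0)$ and moving the two boundary sums to the other side, the equation \eqref{sehv2} becomes
\begin{equation*}
\sum_{T\in\T_h}\|\nabla\cdot(a\nabla e_0+\bb e_0)\|_T^2 = -c(\epsilon_h,\sigma)-\ell_u(\sigma)+\sum_{T\in\T_h}\Big[-\langle e_0-e_b,a\nabla\sigma\cdot\bn\rangle_{\partial T}+\langle(a\nabla e_0+\bb e_0)\cdot\bn-e_n,\sigma\rangle_{\partial T}\Big].
\end{equation*}

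Next I would bound each of the four groups on the right. For the two boundary terms: apply Cauchy--Schwarz together with the inverse/trace inequality \eqref{x} applied to the polynomial $\sigma$ (so $\|a\nabla\sigma\cdot\bn\|_{\partial T}\le Ch_T^{-1}\|\nabla\sigma\|_{\partial T}\le Ch_T^{-2}\|\sigma\|_T$ and $\|\sigma\|_{\partial T}\le Ch_T^{-1/2}\|\sigma\|_T$), matched against $h_T^{-3/2}\|e_0-e_b\|_{\partial T}$ and $h_T^{-1/2}\|(a\nabla e_0+\bb e_0)\cdot\bn-e_n\|_{\partial T}$ respectively; both of the latter factors are controlled by $\3bar e_h\3bar_{W_h}$ by definition of $s(\cdot,\cdot)$. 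This gives a bound $C\3bar e_h\3bar_{W_h}\big(\sum_T h_T^{-4}\|\sigma\|_T^2 + h_T^{-1}\|\sigma\|_T^2\big)^{1/2}$, which is \emph{not} directly absorbable because of the $h_T^{-4}$ weight. The resolution — and this is the point where the hypotheses on $\tau_1,\tau_2$ enter — is that we must \emph{also} control $\3bar e_h\3bar_{W_h}$ and the $c$-terms in a way that produces the right power of $h$; more precisely, one uses Theorem \ref{theoestimate} to replace $\3bar e_h\3bar_{W_h}$ by $Ch^{k-1}\|u\|_{k+1}$ (up to the $\tau_i^{-1/2}$ factors), uses the inverse estimate $\|\nabla\sigma\|_T\le Ch_T^{-1}\|\sigma\|_T$ and $\|\partial^2_{ij}\sigma\|_T\le Ch_T^{-2}\|\sigma\|_T$ to see that $|c(\epsilon_h,\sigma)|\le C\3bar\epsilon_h\3bar_{M_h}\big(\tau_1^{1/2}\sum_T\|\sigma\|_T^2+\tau_2^{1/2}\sum_T\|\sigma\|_T^2\big)^{1/2}$, giving the $(1+\tau_i^{1/2})$ factor, and bounds $|\ell_u(\sigma)|$ as in \eqref{aij} of Theorem \ref{theoestimate} (with $\sigma$ in place of $\epsilon_h$, noting $\sigma$ need not vanish against $I_h^l$ so one keeps the $h^{l+1}\|F_u\|_{l+1}$ factor times $h_T^{-?}\|\sigma\|_T$). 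In every case the right-hand side takes the form $C\,(\text{error size})\cdot\big(\sum_T h_T^{-4}\|\sigma\|_T^2\big)^{1/2}$.

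The last step is a bootstrap/absorption: write $X:=\big(\sum_T\|\nabla\cdot(a\nabla e_0+\bb e_0)\|_T^2\big)^{1/2}=\big(\sum_T\|\sigma\|_T^2\big)^{1/2}$ and note that the scaling mismatch is only apparent — in the $h_T^{-4}\|\sigma\|_T^2$ term we should instead keep the boundary quantities un-expanded, i.e. bound $|\langle e_0-e_b,a\nabla\sigma\cdot\bn\rangle_{\partial T}|\le h_T^{-3/2}\|e_0-e_b\|_{\partial T}\cdot h_T^{3/2}\|a\nabla\sigma\cdot\bn\|_{\partial T}$ and use the trace inequality on $\nabla\sigma$ \emph{as a polynomial} on $T$ so that $h_T^{3/2}\|a\nabla\sigma\cdot\bn\|_{\partial T}\le C h_T\|\nabla\sigma\|_T\le C\|\sigma\|_T$; likewise $h_T^{1/2}\|\sigma\|_{\partial T}\le C\|\sigma\|_T$. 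This yields cleanly
\begin{equation*}
X^2 \le C\big(\3bar e_h\3bar_{W_h}+\3bar\epsilon_h\3bar_{M_h}(1+\tau_i^{1/2})+\text{(bound on }\ell_u)\big)\,X,
\end{equation*}
so $X\le C\big(\3bar e_h\3bar_{W_h}+(1+\tau_i^{1/2})\3bar\epsilon_h\3bar_{M_h}+\dots\big)$, and inserting the estimates of Theorem \ref{theoestimate} (which carry the $(1+\tau_i^{-1/2})h^{k-1}\|u\|_{k+1}$ bound on $\3bar e_h\3bar_{W_h}+\3bar\epsilon_h\3bar_{M_h}$) gives exactly \eqref{rese0}, the product $(1+\tau_i^{1/2})(1+\tau_i^{-1/2})$ appearing because the $c$-term contributes $\tau_i^{1/2}$ while Theorem \ref{theoestimate} already contains $\tau_i^{-1/2}$. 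The main obstacle I anticipate is precisely the bookkeeping of mesh powers in the boundary terms: one has to be careful to pair $h_T$-weights so that the trace/inverse inequalities close without leaving a negative power of $h_T$, and to verify that $\sigma=\nabla\cdot(a\nabla e_0+\bb e_0)$ is genuinely in $M_h$ (which relies on $a,\bb$ being piecewise constant, as assumed at the start of Section \ref{Section:EU}).
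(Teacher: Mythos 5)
Your proposal is correct and follows essentially the same route as the paper: both start from the error equation \eqref{sehv2}, use \eqref{disgradient*} and elementwise integration by parts to express $-\sum_T(\nabla\cdot(a\nabla e_0+\bb e_0),\sigma)_T$ as $c(\epsilon_h,\sigma)+\ell_u(\sigma)$ plus the two boundary mismatch terms, bound each piece by $C(\cdots)\big(\sum_T\|\sigma\|_T^2\big)^{1/2}$ via the trace inequality \eqref{x} and inverse estimates, and conclude by taking $\sigma$ to be the residual itself and inserting the bounds of Theorem \ref{theoestimate}, with the $(1+\tau_i^{1/2})(1+\tau_i^{-1/2})$ factor arising exactly as you describe. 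Your intermediate detour through the $h_T^{-4}$-weighted bound is unnecessary, but you resolve it with the same $h_T$-pairing the paper uses, so the argument closes correctly.
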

\begin{proof}
From the error equation \eqref{sehv2} we have
\begin{equation}\label{au1-1}
b(e_h, \sigma)=c(\epsilon_h, \sigma)+\ell_u(\sigma),\qquad \forall \sigma\in M_h.
\end{equation} 
Recall that
\begin{equation}\label{au2-1}
\begin{split}
b(e_h, \sigma) = & \sum_{T\in {\cal T}_h} (a\nabla_w e_h+\bb e_0 , \nabla\sigma)_T -\langle e_n, \sigma\rangle_\pT\\
= & \sum_{T\in {\cal T}_h} (a\nabla e_0+\bb e_0 , \nabla\sigma)_T + \langle e_b-e_0, a\nabla\sigma\cdot\bn\rangle_\pT-\langle e_n, \sigma\rangle_\pT \\
=&-\sum_{T\in {\cal T}_h} (\nabla\cdot(a\nabla e_0 + \bb e_0), \sigma)_T -\langle e_b-e_0, a\nabla\sigma\cdot\bn\rangle_\pT\\&+\langle e_n - (a\nabla e_0 + \bb e_0)\cdot\bn, \sigma\rangle_\pT, 
 \end{split}
\end{equation}
where we have used (\ref{disgradient*}) with $\bpsi=a \nabla\sigma$ and the usual integration by parts.
Substituting (\ref{au2-1}) into (\ref{au1-1}) gives
\begin{equation}\label{EQ:aaa:001}
\begin{split}
&-\sum_{T\in {\cal T}_h} (\nabla\cdot(a\nabla e_0 + \bb e_0), \sigma)_T \\
=&
c(\epsilon_h, \sigma)+\ell_u(\sigma) +\sum_{T\in {\cal T}_h}\langle e_0-e_b, a\nabla\sigma\cdot\bn\rangle_\pT +\langle e_n - (a\nabla e_0 + \bb e_0)\cdot\bn, \sigma\rangle_\pT\\
=&J_1+J_2+J_3+J_4,
\end{split}
\end{equation}
where $J_i $ is defined accordingly for $i=1,\cdots,4$.

We shall estimate each term $J_i$ in (\ref{EQ:aaa:001}) respectively. 
With $J_1$, we have for the case of $l=k$, $J_1=0$. For the case of $l=k-1$ and $k=1$, we have
\begin{equation*}
\begin{split}
J_1=&\tau_1 \sum_{T\in {\cal T}_h} h_T^2 (\nabla \epsilon_h, \nabla\sigma)_T\\
\leq &   \Big( \sum_{T\in {\cal T}_h} \tau_1 h_T^2 \|\nabla \epsilon_h\|_T^2\Big)^{\frac{1}{2}}\Big( \sum_{T\in {\cal T}_h}\tau_1 h_T^2 \|\nabla\sigma\|_T\Big)^{\frac{1}{2}}\\
\leq & C\tau_1^{\frac{1}{2}}\3bar \epsilon_h \3bar_{M_h} \Big( \sum_{T\in {\cal T}_h}   \| \sigma\|_T\Big)^{\frac{1}{2}},
\end{split}
\end{equation*}
where we have used the Cauchy-Schwarz inequality and the inverse inequality. Similarly, for the case of $l=k-1$ and $k\geq 2$, we have
$$
J_1  \leq C\tau_2^{\frac{1}{2}}\3bar \epsilon_h \3bar_{M_h} \Big( \sum_{T\in {\cal T}_h}   \| \sigma\|_T\Big)^{\frac{1}{2}}.$$ 

As to the term $J_2$, we have from (\ref{lu})  that for $l=k$, $\ell_u(\sigma)=0$; for $l=k-1$, we have, {\color{black} by following the same argument as that in \eqref{aij}
\begin{equation*} 
 \begin{split}
|J_2|=|\ell_u(\sigma)| 
 \leq & \Big(\sum_{T\in {\cal T}_h} \| (Q_n^{(l)}-I)((a\nabla u+\bb u)\cdot \bn)\|_{\partial T}^2  \Big)^{\frac{1}{2}}\Big(\sum_{T\in {\cal T}_h} \| \sigma\|_{\partial T} ^2  \Big)^{\frac{1}{2}}\\
  \leq & Ch^{k-1}\|u\|_{k+1}\Big(\sum_{T\in {\cal T}_h}  \| \sigma\|_{T} ^2  \Big)^{\frac{1}{2}},
\end{split} 
\end{equation*}
where we used the Cauchy-Schwarz inequality, the estimate (\ref{3.2}) with $m=l=k-1$, and the trace inequalities (\ref{trace-inequality}) and (\ref{x}).
}
As to the term $J_3$, we have
\begin{equation*} 
 \begin{split}
J_3  \leq  &   \Big(\sum_{T\in {\cal T}_h}  h_T^{-3}\|e_0-e_b\|^2_{\pT} \Big)^{\frac{1}{2}}\Big(\sum_{T\in {\cal T}_h} h_T^3\|a\nabla\sigma\cdot\bn\|^2_{\pT} \Big)^{\frac{1}{2}}\\
  \leq  & C  \3bar e_h \3bar_{W_h}\Big(\sum_{T\in {\cal T}_h} h_T^2\|a\nabla\sigma\cdot\bn\|^2_{T}  \Big)^{\frac{1}{2}}\\
   \leq  & C  \3bar e_h \3bar_{W_h}\Big(\sum_{T\in {\cal T}_h} \| \sigma \|^2_{T}  \Big)^{\frac{1}{2}},
\end{split} 
\end{equation*}
where we used the Cauchy-Schwarz inequality, the trace inequality (\ref{x}) and the inverse inequality.

 For the last term $J_4$, we have
 \begin{equation*} 
 \begin{split}
 &\sum_{T\in {\cal T}_h} \langle e_n - (a\nabla e_0 + \bb e_0)\cdot\bn, \sigma\rangle_\pT
\\
\leq  &   \Big(\sum_{T\in {\cal T}_h}  h_T^{-1}\|e_n - (a\nabla e_0 + \bb e_0)\cdot\bn\|^2_{\pT} \Big)^{\frac{1}{2}}\Big(\sum_{T\in {\cal T}_h} h_T  \|\sigma\|^2_{\pT} \Big)^{\frac{1}{2}}\\
\leq  & C  \3bar e_h \3bar_{W_h}\Big(\sum_{T\in {\cal T}_h}  \|\sigma\|^2_{T} \Big)^{\frac{1}{2}},
\end{split} 
\end{equation*}
where we used the Cauchy-Schwarz inequality and the trace inequality (\ref{x}).
 
Substituting the above estimates for $J_i (i=1, \cdots, 4)$ into (\ref{EQ:aaa:001}) and combining with (\ref{erres}) completes the proof of (\ref{rese0}).
\end{proof}

\section{Error Estimates in $H^1$ and $L^2$}\label{Section:H1L2Error}

Consider the dual problem of seeking an unknown function $w$ such that
\begin{align}\label{dual1}
-\nabla \cdot (a \nabla w) +\bb\cdot  \nabla w=& e_0,\qquad \text{in}\ \Omega,\\
w=& \ 0,\qquad \text{on}\ \Gamma_D, \label{dual2}\\
 a \nabla w \cdot\bn=& \ 0,\qquad \text{on}\ \Gamma_N, \label{dualN}
\end{align}
for any given $e_0\in L^2(\Omega)$.
The problem (\ref{dual1})-(\ref{dualN}) is said to be
$H^{1+s} (\frac{1}{2} <s\leq 1)$-regular in the sense that
\begin{equation}\label{regul}
\|w\|_{1+s}\leq C\|e_0\|.
\end{equation}

\begin{lemma} Let $e_h=\{e_0, e_b, e_n\}$ be the error function defined in (\ref{error}).  There holds
\begin{equation}\label{ehe0}
\|\nabla_w e_h-\nabla e_0\|_T \leq C h_T^{-\frac{1}{2}} \|e_0-e_b\|_{\pT}.
\end{equation}
\end{lemma}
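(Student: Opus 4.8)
The plan is to estimate $\nabla_w e_h - \nabla e_0$ on a fixed element $T$ by testing the defining identity for the discrete weak gradient against the polynomial $\bpsi = \nabla_w e_h - \nabla e_0 \in [P_{k-1}(T)]^d$ itself, so that the left-hand side becomes $\|\nabla_w e_h - \nabla e_0\|_T^2$. First I would write, using the definition \eqref{disgradient} of $\nabla_{w,k-1,T}$ and the integration-by-parts identity \eqref{disgradient*} applied to the (polynomial, hence smooth) component $e_0$, that for all $\bpsi \in [P_{k-1}(T)]^d$
\[
(\nabla_w e_h - \nabla e_0, \bpsi)_T = -\langle e_0 - e_b, \bpsi\cdot\bn\rangle_{\pT}.
\]
This is the key algebraic step: the interior term $-(e_0,\nabla\cdot\bpsi)_T$ in \eqref{disgradient} is exactly cancelled by the interior term produced when integrating $(\nabla e_0,\bpsi)_T$ by parts, leaving only the boundary mismatch $e_0 - e_b$.

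Next I would substitute $\bpsi = \nabla_w e_h - \nabla e_0$, which is a legitimate choice since $\nabla_w e_h = \nabla_{w,k-1,T}e_h \in [P_{k-1}(T)]^d$ and $\nabla e_0 \in [P_{k-1}(T)]^d$ because $e_0 \in P_k(T)$. Then
\[
\|\nabla_w e_h - \nabla e_0\|_T^2 = -\langle e_0 - e_b,\, (\nabla_w e_h - \nabla e_0)\cdot\bn\rangle_{\pT} \le \|e_0-e_b\|_{\pT}\,\|(\nabla_w e_h - \nabla e_0)\cdot\bn\|_{\pT}
\]
by the Cauchy--Schwarz inequality on $\pT$. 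Applying the trace--inverse inequality \eqref{x} to the polynomial $\nabla_w e_h - \nabla e_0$ gives $\|(\nabla_w e_h - \nabla e_0)\cdot\bn\|_{\pT} \le \|\nabla_w e_h - \nabla e_0\|_{\pT} \le C h_T^{-1/2}\|\nabla_w e_h - \nabla e_0\|_T$, and cancelling one factor of $\|\nabla_w e_h - \nabla e_0\|_T$ yields exactly \eqref{ehe0}.

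I do not anticipate a serious obstacle here; the only point requiring a little care is justifying that \eqref{disgradient*} is applicable, i.e.\ that $e_0$ is smooth enough for ordinary integration by parts on $T$ — but $e_0 = u_0 - Q_0^{(k)}u \in P_k(T)$ is a polynomial, so this is immediate, and the same observation guarantees $\nabla e_0 \in [P_{k-1}(T)]^d$ so no projection error enters. One should also note that the constant $C$ in \eqref{ehe0} depends only on the shape-regularity of $\T_h$ through \eqref{x}, and is independent of $h_T$ and of $e_h$.
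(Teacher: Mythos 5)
Your proposal is correct and is essentially the paper's argument: the paper derives the same identity $(\nabla_w e_h-\nabla e_0,\bpsi)_T=-\langle e_0-e_b,\bpsi\cdot\bn\rangle_{\pT}$ and bounds $\|\nabla_w e_h-\nabla e_0\|_T$ by a supremum over $\bpsi\in[P_{k-1}(T)]^d$, which your choice $\bpsi=\nabla_w e_h-\nabla e_0$ simply realizes explicitly. The justifications (that $\nabla e_0\in[P_{k-1}(T)]^d$ because $e_0$ is a polynomial of degree $k$, plus the trace inequality \eqref{x}) match the paper's.
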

\begin{proof}
From (\ref{disgradient*}), we have
$$
(\nabla_w e_h-\nabla e_0, \bpsi)_T=-\langle e_0-e_b, \bpsi \cdot \bn\rangle_{\pT}, \qquad \forall  \bpsi \in [P_{k-1}(T)]^d.
$$
From the Cauchy-Schwarz inequality and the trace inequality (\ref{x}), we thus have
\begin{equation*}
\begin{split}
\|\nabla_w e_h-\nabla e_0\|_T\leq &\sup_{\forall \bpsi \in [P_{k-1}(T)]^d}\frac{ \|e_0-e_b\|_{\pT}\|\bpsi \cdot \bn\|_{\pT}}{\|\bpsi\|_T}
\\ \leq & \sup_{\forall \bpsi \in [P_{k-1}(T)]^d}\frac{Ch_T^{-\frac{1}{2}} \|e_0-e_b\|_{\pT}\|\bpsi \|_{T}}{\|\bpsi\|_T}\\
\\ \leq & C h_T^{-\frac{1}{2}} \|e_0-e_b\|_{\pT}.
\end{split}
\end{equation*}
This completes the proof of the lemma.
\end{proof}

The following theorem presents the error estimate in the usual $L^2$ norm for the first component $u_0$ in the primal variable $u_h=\{u_0, u_b, u_n\}$ of the PDWG solution arising from the numerical scheme (\ref{32})-(\ref{2}).

\begin{theorem}\label{Thm:L2errorestimate} Assume that the dual problem (\ref{dual1})-(\ref{dualN}) has the $H^{1+s}$-regularity with a priori estimate (\ref{regul}) for $s\in (\frac{1}{2},1]$. There exists a constant $C$ such that
\begin{equation}\label{e0}
\|e_0\|  \leq  \left\{\begin{split}
C h^{k+s}\|u\|_{k+1},\quad &l=k,\\
C(1+\tau_1^{\frac{1}{2}}(1+\tau_1^{-\frac{1}{2}}))h^{k}\|u\|_{k+1},\quad & k=1, l=k-1,   \\
C(1+\tau_2^{\frac{1}{2}})(1+\tau_2^{-\frac{1}{2}})h^{k+s}\|u\|_{k+1},\quad &k\geq 2,  l=k-1.
  \end{split}\right.   
\end{equation}
provided that the meshsize $h$ is sufficiently small.
\end{theorem}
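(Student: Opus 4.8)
The plan is to use a standard duality (Aubin–Nitsche) argument adapted to the primal-dual weak Galerkin framework. I would begin by testing the dual problem (\ref{dual1})--(\ref{dualN}) against $e_0$: since $e_0 = -\nabla\cdot(a\nabla w) + \bb\cdot\nabla w$, I write $\|e_0\|^2 = (e_0, -\nabla\cdot(a\nabla w) + \bb\cdot\nabla w)$ and integrate by parts element-by-element on $\T_h$ to convert this into a sum of volume and boundary terms involving $e_0$, $\nabla e_0$, $e_b$, $e_n$, and the traces of $w$ and $a\nabla w\cdot\bn$. The key is to recognize the resulting expression as (essentially) $b(e_h, Q_h^{(k)}w)$ or a closely related bilinear form evaluated at an appropriate projection of $w$, so that the error equation \eqref{sehv2} can be invoked to replace $b(e_h, \cdot)$ by $c(\epsilon_h, \cdot) + \ell_u(\cdot)$. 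Along the way I will need the commutativity property \eqref{l}, the identity \eqref{disgradient*} with $\bpsi = a\nabla(\text{projection of }w)$, and the fact that $a\nabla e_0 + \bb e_0 \in H(\mathrm{div}, T)$, together with Lemma~\ref{ehe0} to control $\|\nabla_w e_h - \nabla e_0\|_T$ by $h_T^{-1/2}\|e_0 - e_b\|_{\pT}$, which in turn is bounded by $\3bar e_h\3bar_{W_h}$.

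After this manipulation, $\|e_0\|^2$ should be expressed as a finite sum of terms, each a product of (i) a quantity already estimated in the residual error estimates — namely $\3bar e_h \3bar_{W_h}$, $\3bar \epsilon_h \3bar_{M_h}$, or $\big(\sum_T \|\nabla\cdot(a\nabla e_0 + \bb e_0)\|_T^2\big)^{1/2}$, all controlled by \eqref{erres} and \eqref{rese0} — times (ii) an approximation-theoretic factor coming from replacing $w$ (or $a\nabla w\cdot\bn$) by its $L^2$ projection, which by Lemma~\ref{Lemma5.2} and the trace inequalities \eqref{trace-inequality}, \eqref{x} yields a power of $h$ times $\|w\|_{1+s}$ (or $\|w\|_{2}$ when $s=1$ is available for $k\ge 2$). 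I then apply the $H^{1+s}$-regularity estimate \eqref{regul} to bound $\|w\|_{1+s} \le C\|e_0\|$, divide through by $\|e_0\|$, and collect the powers of $h$. The term $\ell_u(\epsilon_h)$-type contributions are handled exactly as in \eqref{aij}, producing the $\tau_1^{-1/2}$ and $\tau_2^{-1/2}$ factors; the $J_1$-type contribution (the $c(\epsilon_h,\cdot)$ piece) produces the $\tau_1^{1/2}$, $\tau_2^{1/2}$ factors as in the proof of \eqref{rese0}. One must be careful that in the $l=k-1$, $k=1$ case only the weaker $s$-independent rate is claimed, reflecting that the first-order method cannot exploit extra regularity of the dual solution, whereas for $l=k$ and for $l=k-1$, $k\ge2$ one gains the full $h^{k+s}$.

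The main obstacle I anticipate is the careful bookkeeping of the boundary terms that survive after integration by parts against the dual solution. Because $w_b$ and $(a\nabla w\cdot\bn)$ are only their $L^2$ projections in the discrete test function $Q_h w$, one gets mismatch terms of the form $\langle e_0 - e_b,\ a\nabla(w - Q_0^{(k)}w)\cdot\bn\rangle_{\pT}$ and $\langle e_n - (a\nabla e_0 + \bb e_0)\cdot\bn,\ w - Q_0^{(k)}w\rangle_{\pT}$; each must be split using the $L^2$-orthogonality of the projections so that one factor carries a high power of $h$ from approximation theory and the other is absorbed into $\3bar e_h\3bar_{W_h}$ via the scaled trace inequality \eqref{x}. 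Getting the powers of $h_T$ to balance correctly in each such pairing — and verifying that the fractional exponent $s$ propagates consistently from \eqref{regul} through the estimate \eqref{3.2}/\eqref{3.3}/\eqref{3.3-2} with non-integer smoothness — is the delicate part; the rest is a routine assembly of the already-established bounds \eqref{erres} and \eqref{rese0}.
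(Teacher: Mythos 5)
Your plan is correct and follows essentially the same route as the paper's proof: test the dual problem against $e_0$, integrate by parts elementwise, identify the result with $b(e_h,w)$, split off the projection ${\cal Q}_h^{(k)}w$ so the error equation \eqref{sehv2} converts $b(e_h,{\cal Q}_h^{(k)}w)$ into $c(\epsilon_h,\cdot)+\ell_u(\cdot)$, and then estimate the four resulting pieces (including the two boundary mismatch terms you single out, which are exactly the paper's $I_4$ and $I_{31}$) via the trace and approximation inequalities, \eqref{ehe0}, \eqref{erres}, \eqref{rese0}, and the regularity bound \eqref{regul}. You have also correctly traced the origin of the $\tau_i^{\pm 1/2}$ factors and of the loss of the exponent $s$ in the $k=1$, $l=k-1$ case, so no substantive gap remains beyond carrying out the bookkeeping you describe.
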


\begin{proof} Testing (\ref{dual1}) with $e_0$ on each element $T\in {\cal T}_h$, we obtain from the usual integration by parts that
\begin{equation}\label{2.14:800:10:L2}
\begin{split}
\|e_0\|^2 =&\sum_{T\in{\cal T}_h}(-\nabla \cdot (a \nabla w) +\bb\cdot  \nabla w, e_0)_T\\
 = & \sum_{T\in{\cal T}_h}(a\nabla w, \nabla e_0)_T-\langle a\nabla w\cdot \bn, e_0\rangle_{\partial T}+(\bb\cdot  \nabla w, e_0)_T\\
 = &  \sum_{T\in{\cal T}_h}(a\nabla w, \nabla e_0)_T-\langle a\nabla w\cdot \bn, e_0-e_b\rangle_{\partial T}+(\bb\cdot  \nabla w, e_0)_T,
\end{split}
\end{equation}
where we used $\sum_{T\in{\cal T}_h}\langle a\nabla w\cdot \bn, e_b\rangle_{\partial T}=\langle a\nabla w\cdot \bn, e_b\rangle_{\partial \Omega}=0$ due to the facts that $a\nabla w\cdot \bn=0$ on $\Gamma_N$ and $e_b=0$ on $\Gamma_D$.

It follows from (\ref{disgradient*}) and (\ref{l}) that
\begin{equation*}
\begin{split}
(a\nabla_w e_h, \nabla_w (Q_h w))_T =&(a\nabla_w e_h, {\cal Q}^{(k-1)}_h  \nabla w)_T\\
 = & (a\nabla e_0, {\cal Q}_h^{(k-1)} \nabla w)_T-\langle e_0-e_b, a{\cal Q}_h^{(k-1)}\nabla w\cdot \bn \rangle_{\partial T}\\
  = & (a\nabla e_0, \nabla w)_T-\langle e_0-e_b, a {\cal Q}_h^{(k-1)}  \nabla w\cdot \bn \rangle_{\partial T},
\end{split}
\end{equation*}
which gives {\color{black}
\begin{equation} \label{naterm}
\begin{split}
 (a\nabla e_0, \nabla w)_T&=(a\nabla_w e_h, {\cal Q}^{(k-1)}_h  \nabla w)_T +\langle e_0-e_b, a{\cal Q}_h^{(k-1)}  \nabla w\cdot \bn \rangle_{\partial T}\\
   &=(a\nabla_w e_h,  \nabla w)_T +\langle e_0-e_b, a{\cal Q}_h^{(k-1)}  \nabla w\cdot \bn \rangle_{\partial T}.
\end{split}
\end{equation}
}
Substituting (\ref{naterm}) into (\ref{2.14:800:10:L2}) leads to {\color{black}
\begin{equation}\label{l2norm}
\begin{split}
 \|e_0\|^2 
=&  \sum_{T\in{\cal T}_h}(a\nabla_w e_h,\nabla  w)_T +(\bb e_0,  \nabla w)_T +\langle e_0-e_b, a({\cal Q}_h^{(k-1)}-I)  \nabla w\cdot \bn \rangle_{\partial T} \\
=& b(e_h,w) +\sum_{T\in{\cal T}_h} \langle e_n, w\rangle_{\pT}+\langle e_0-e_b, a({\cal Q}_h^{(k-1)}-I)  \nabla w\cdot \bn \rangle_{\partial T} \\ 
=&  c(\epsilon_h, {\cal Q}_h^{(k)} w)
+\ell_u({\cal Q}_h^{(k)} w) +b(e_h, (I-{\cal Q}_h^{(k)})w)\\
&\ +\sum_{T\in \T_h}\langle e_0-e_b, a({\cal Q}_h^{(k-1)}-I)  \nabla w\cdot \bn \rangle_{\partial T} \\
=& I_1+I_2+I_3+I_4,
\end{split}
\end{equation}
where in the second step, we have used the fact that  $\sum_{T\in{\cal T}_h} \langle e_n, w\rangle_{\pT}=\langle e_n, w\rangle_{\partial \Omega}=0$ due to the facts that $w=0$ on $\Gamma_D$ and $e_n=0$ on $\Gamma_N$, and $I_i (i=1, \cdots, 4)$ is defined accordingly.
}

We shall estimate each of the four terms $I_i$ for $i=1, \cdots, 4$ in (\ref{l2norm}). As to the term $I_1$, for the case of $l=k$ where $\tau_1=0$ and $\tau_2=0$,  we have
\begin{equation}\label{l2norm1-1-1}
I_1=c(\epsilon_h, {\cal Q}_h^{(k)}w)=0.
\end{equation}
{\color{black} 
For the case of $l=k-1$,  we have, from the Cauchy-Schwarz inequality and (\ref{erres}), 
\begin{equation}\label{l2norm1-1}
\begin{split}
I_1=& \tau_1   \sum_{T\in \T_h}h_T^2 (\nabla \epsilon_h, \nabla {\cal Q}_h^{(k)}w)_T+ \tau_2   \sum_{i, j=1}^d \sum_{T\in \T_h}h_T^4(\partial_{ij}^2\epsilon_h, \partial_{ij}^2 ({\cal Q}_h^{(k)}w))_T\\
\le & \3bar \epsilon_h \3bar_{M_h} \left(  \sum_{T\in \T_h}\Big(\tau_1h_T^2 \|\nabla {\cal Q}_h^{(k)}w\|_T^2+ \tau_2   \sum_{i, j=1}^d h_T^4\|\partial_{ij}^2 {\cal Q}_h^{(k)}w\|_T^2\Big)\right)^{\frac 12}\\
 \leq & 
 \left\{\begin{split}
Ch\tau_1^{\frac 12}\3bar \epsilon_h \3bar_{M_h}\|w\|_{1},\quad & k=1, l=k-1,
 \\
Ch^{1+s}\tau_2^{\frac 12}\3bar \epsilon_h \3bar_{M_h} \|w\|_{1+s},\quad &  k\geq 2, l=k-1.
  \end{split}\right.
\end{split}
\end{equation}
}
Here in the last step, we have used the fact that $\tau_2=0$ for $k=1$ and $\tau_1=0$ for $k\ge 2$, and the inverse inequality
\[
   | {\cal Q}_h^{(k)}w|_{2}\le C h^{s-1} | {\cal Q}_h^{(k)}w|_{1+s}\le C h^{s-1} \|w\|_{1+s},\ \ \frac 12<s\le 1. 
\]

As to the term $I_2$, for the case of $l=k$, we have from (\ref{lu}) that
$$
I_2=\ell_u({\cal Q}_h^{(k)} w)=0.
$$
{\color{black} For the case of $l=k-1$,  by the same argument as what we did in \eqref{aij}, we have 
\begin{equation}\label{l2norm1} 
\begin{split}
&|I_2|=|\ell_u({\cal Q}_h^{(k)} w)|\\
\leq &Ch^{k-1}\|u\|_{k+1}\Big( \sum_{T\in {\cal T}_h} \|(I-I_h^l){\cal Q}_h^{(k)} w)\|_{T}^2+h^2_T\| \nabla((I-I_h^l){\cal Q}_h^{(k)} w)\|_{T}^2\Big)^{\frac{1}{2}}\\
 \leq & 
 \left\{\begin{split}
Ch^{k}\|u\|_{k+1}\|w\|_{1},\quad & k=1, l=k-1,
 \\
Ch^{k+s}\|u\|_{k+1} \|w\|_{1+s},\quad &  k\geq 2, l=k-1.
  \end{split}\right.
\end{split}
\end{equation}
Here for any function $v$, $I_h^lv$ denotes the cell average and linear interpolation of $v$
on each element $T\in {\cal T}_h$ for  $l=0$ and $l\ge 1$, resepctively. 

To estimate $I_3$, we note that 
\begin{eqnarray*}
 I_3&=& 
\sum_{T\in{\cal T}_h}\langle (a\nabla_w e_h+\bb e_0)\cdot\bn-e_n, (I-{\cal Q}_h^{(k)})w\rangle_{\pT}\\
&& -\sum_{T\in{\cal T}_h}(\nabla\cdot(a\nabla_w e_h+\bb e_0),  (I-{\cal Q}_h^{(k)}) w)_T  \\
 &=& I_{31}-I_{32}. 
\end{eqnarray*}
To estimate $I_{31}$,  we have from the Cauchy-Schwarz inequality, the trace inequality (\ref{x}),  (\ref{ehe0}), the estimate (\ref{3.3-2}) with $m=s$ that (with $F_e=a\nabla e_0+\bb e_0$)
\begin{eqnarray*}
&&|I_{31}|\\
&= &\sum_{T\in{\cal T}_h}\langle F_e\cdot\bn-e_n, (I-{\cal Q}_h^{(k)})w\rangle_{\pT} + \langle (a\nabla_w e_h-a\nabla e_0)\cdot\bn, (I-{\cal Q}_h^{(k)})w\rangle_{\pT}\\
&\leq & \Big\{\Big(\sum_{T\in {\cal T}_h}\|F_e\cdot\bn-e_n\|_{\pT}^2\Big)^{\frac{1}{2}} + \Big(\sum_{T\in {\cal T}_h}\| (a\nabla_w e_h-a\nabla e_0)\cdot\bn\|_{\pT}^2\Big)^{\frac{1}{2}}\Big\}\\
&& \ \cdot
\Big(\sum_{T\in {\cal T}_h}\|(I-{\cal Q}_h^{(k)})w\|_{\pT}^2\Big)^{\frac{1}{2}}\\
  &\leq &C\Big\{ \Big(\sum_{T\in {\cal T}_h}  \|F_e\cdot\bn-e_n\|_{\pT}^2     \Big)^{\frac{1}{2}} + \Big(\sum_{T\in {\cal T}_h} h_T^{-1}\|(a\nabla_w e_h-a\nabla e_0)\cdot\bn\|_{T}^2 \Big)^{\frac{1}{2}} \Big\}h^{s+\frac{1}{2}}\|w\|_{1+s}\\
 & \leq &C\Big\{h^{\frac{1}{2}}\3bar e_h\3bar_{W_h} + \Big(\sum_{T\in {\cal T}_h} h_T^{-2}\|e_0- e_b \|_{\pT}^2 \Big)^{\frac{1}{2}} \Big\}h^{s+\frac{1}{2}}\|w\|_{1+s}\\
&\leq& C h^{s}\3bar e_h\3bar_{W_h}\|w\|_{1+s}.
\end{eqnarray*}

Similarly, for the term $I_{32}$, we have from the Cauchy-Schwarz inequality, the estimate (\ref{3.3-2}) with $m=s$,   (\ref{ehe0}),  the inverse inequality that
\begin{eqnarray*} 
| I_{32}| 
&=&  \left|\sum_{T\in{\cal T}_h} (\nabla\cdot F_e,  (I-{\cal Q}_h^{(k)}) w)_T+(\nabla\cdot(a\nabla_w e_h-a\nabla e_0),  (I-{\cal Q}_h^{(k)}) w)_T\right|\\ 
&\leq &C\Big\{ \Big(\sum_{T\in {\cal T}_h}  \|\nabla\cdot F_e\|_{T}^2  \Big)^{\frac{1}{2}}   +\Big(\sum_{T\in {\cal T}_h} h_T^{-2}\|a\nabla_w e_h-a\nabla e_0\|_{T}^2  \Big)^{\frac{1}{2}}\Big\}h^{1+s}\|w\|_{1+s}\\
&\leq &C\Big\{ \Big(\sum_{T\in {\cal T}_h}  \|\nabla\cdot F_e\|_{T}^2  \Big)^{\frac{1}{2}}   +\Big(\sum_{T\in {\cal T}_h} h_T^{-3}\|e_0- e_b\|_{\pT}^2  \Big)^{\frac{1}{2}}\Big\}h^{1+s}\|w\|_{1+s}\\
&\leq &C\Big\{ \Big(\sum_{T\in {\cal T}_h}  \|\nabla\cdot(a\nabla e_0+\bb e_0)\|_{T}^2  \Big)^{\frac{1}{2}}   + \3bar e_h \3bar_{W_h} \Big\}h^{1+s}\|w\|_{1+s}.
\end{eqnarray*}
  Consequently, 
 \[
    |I_3|\le C \Big\{ \Big(\sum_{T\in {\cal T}_h}  \|\nabla\cdot(a\nabla e_0+\bb e_0)\|_{T}^2  \Big)^{\frac{1}{2}}   + \3bar e_h \3bar_{W_h} \Big\}h^{1+s}\|w\|_{1+s}.
 \]

}

As to the term $I_4$, we have from the Cauchy-Schwarz inequality, trace inequality (\ref{trace-inequality}), the estimate  (\ref{3.3}) with $m=s$,  that
\begin{equation}\label{l2norm2}
\begin{split}
|I_{4}| 
\leq & \Big(\sum_{T\in {\cal T}_h}  \|e_0-e_b\|_{\partial T}^2 \Big)^{\frac{1}{2}}\Big(\sum_{T\in {\cal T}_h}  \|a({\cal Q}_h^{(k-1)}-I)  \nabla w\cdot \bn \|_{\partial T}^2 \Big)^{\frac{1}{2}}\\
\leq & \Big(\sum_{T\in {\cal T}_h} h_T^{-3} \|e_0-e_b\|_{\partial T}^2 \Big)^{\frac{1}{2}}\Big(\sum_{T\in {\cal T}_h}h_T^3  \|a({\cal Q}_h^{(k-1)}-I)  \nabla w\cdot \bn \|_{\partial T}^2 \Big)^{\frac{1}{2}}\\
\leq &C \3bar e_h\3bar_{W_h} \Big(\sum_{T\in {\cal T}_h} h_T^2\|({\cal Q}_h^{(k-1)}-I)  \nabla w\|_{T}^2  +h_T^4 \|({\cal Q}_h^{(k-1)}-I)  \nabla w\|^2_{1, T} \Big)^{\frac{1}{2}}\\
\leq & C\3bar e_h\3bar_{W_h}  h^{s+1}\|w\|_{1+s}. 
\end{split}
\end{equation}

Substituting  (\ref{l2norm1-1-1})- (\ref{l2norm2}) into (\ref{l2norm}) and using the  regularity assumption (\ref{regul}) with the error estimates (\ref{erres}) and (\ref{rese0}) gives (\ref{e0}). This completes the proof of this theorem.
\end{proof}

We shall establish the error estimates for the two boundary components
$u_b$ and $u_n$ of the PDWG solution $u_h=\{u_0, u_b, u_n\}$ in the usual $L^2$ norms defined as follows:
\begin{eqnarray}\label{EQ:eb-eg-L2norm}
\|e_b\| &:=&\|u_b- Q_b^{(k)} u\|=\Big(\sum_{T\in {\cal T}_h} h_T\|e_b\|_{\partial
T}^2\Big)^{\frac{1}{2}},\\
 \|e_n\| &:=&\|u_n - Q_n^{(l)} ((a\nabla u+\bb u)\cdot\bn)\|=\Big(\sum_{T\in {\cal
T}_h} h_T\|e_n\|_{\partial T}^2\Big)^{\frac{1}{2}}.
\end{eqnarray}

\begin{theorem}\label{Thm:L2errorestimate-ub}
Under the assumptions of Theorem \ref{Thm:L2errorestimate}, there
exists a constant $C$ such that
\begin{eqnarray}\label{eb}
\|e_b\| &\leq   \left\{\begin{split}
C h^{k+s}\|u\|_{k+1},\quad &l=k,\\
C(1+\tau_1^{\frac{1}{2}}(1+\tau_1^{-\frac{1}{2}}))h^{k}\|u\|_{k+1},\quad & k=1, l=k-1,   \\
C(1+\tau_2^{\frac{1}{2}})(1+\tau_2^{-\frac{1}{2}})h^{k+s}\|u\|_{k+1},\quad &k\geq 2,  l=k-1.
  \end{split}\right.   \\
\|e_n\| &   \leq  \left\{\begin{split}
C h^{k+s-1}\|u\|_{k+1},\quad &l=k,\\
C(1+\tau_1^{\frac{1}{2}}(1+\tau_1^{-\frac{1}{2}}))h^{k-1}\|u\|_{k+1},\quad & k=1, l=k-1,   \\
C(1+\tau_2^{\frac{1}{2}})(1+\tau_2^{-\frac{1}{2}})h^{k+s-1}\|u\|_{k+1},\quad &k\geq 2,  l=k-1.
  \end{split}\right.    \label{eg}
\end{eqnarray}
provided that the meshsize $h$ is sufficiently small.
\end{theorem}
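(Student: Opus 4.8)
The plan is to derive the bounds on $\|e_b\|$ and $\|e_n\|$ directly from the already-established residual bound $\3bar e_h \3bar_{W_h}$ (Theorem \ref{theoestimate}) and the $L^2$ estimate $\|e_0\|$ (Theorem \ref{Thm:L2errorestimate}), by exploiting the algebraic relationship between the three components that is encoded in the stabilizer $s(\cdot,\cdot)$. Recall that
\[
s(e_h,e_h)=\sum_{T\in\T_h}h_T^{-3}\|e_0-e_b\|_{\pT}^2 + h_T^{-1}\|(a\nabla e_0+\bb e_0)\cdot\bn - e_n\|_{\pT}^2 = \3bar e_h\3bar_{W_h}^2,
\]
so in particular $\sum_T h_T^{-3}\|e_0-e_b\|_{\pT}^2 \le \3bar e_h\3bar_{W_h}^2$ and $\sum_T h_T^{-1}\|(a\nabla e_0+\bb e_0)\cdot\bn-e_n\|_{\pT}^2\le \3bar e_h\3bar_{W_h}^2$.

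For $\|e_b\|$: using the definition \eqref{EQ:eb-eg-L2norm}, write $\|e_b\|^2 = \sum_T h_T\|e_b\|_{\pT}^2 \le 2\sum_T h_T\|e_0-e_b\|_{\pT}^2 + 2\sum_T h_T\|e_0\|_{\pT}^2$. The first sum is bounded by $2\max_T h_T^4 \cdot \sum_T h_T^{-3}\|e_0-e_b\|_{\pT}^2 \le Ch^4\3bar e_h\3bar_{W_h}^2$, which is a higher-order contribution. For the second sum, apply the trace inequality \eqref{x} (since $e_0$ is a polynomial on $T$) to get $\sum_T h_T\|e_0\|_{\pT}^2 \le C\sum_T \|e_0\|_T^2 = C\|e_0\|^2$. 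Combining gives $\|e_b\| \le C(\|e_0\| + h^2\3bar e_h\3bar_{W_h})$, and substituting the bounds from Theorems \ref{Thm:L2errorestimate} and \ref{theoestimate} yields \eqref{eb}; the $\|e_0\|$ term dominates in every case.

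For $\|e_n\|$: similarly, $\|e_n\|^2 = \sum_T h_T\|e_n\|_{\pT}^2 \le 2\sum_T h_T\|(a\nabla e_0+\bb e_0)\cdot\bn - e_n\|_{\pT}^2 + 2\sum_T h_T\|(a\nabla e_0+\bb e_0)\cdot\bn\|_{\pT}^2$. The first sum is bounded by $2\max_T h_T^2 \cdot \sum_T h_T^{-1}\|(a\nabla e_0+\bb e_0)\cdot\bn - e_n\|_{\pT}^2 \le Ch^2\3bar e_h\3bar_{W_h}^2$. For the second sum, use the trace inequality \eqref{x} and the inverse inequality on the polynomial $a\nabla e_0+\bb e_0$ to obtain $\sum_T h_T\|(a\nabla e_0+\bb e_0)\cdot\bn\|_{\pT}^2 \le C\sum_T \|a\nabla e_0+\bb e_0\|_T^2 \le C\sum_T (\|\nabla e_0\|_T^2 + \|e_0\|_T^2)$. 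To control $\sum_T\|\nabla e_0\|_T^2$, I would use $\nabla e_0 = \nabla_w e_h - (\nabla_w e_h - \nabla e_0)$, invoke \eqref{ehe0} for the second piece (which contributes $\sum_T h_T^{-1}\|e_0-e_b\|_{\pT}^2 \le Ch^{-2}\cdot h^{?}$... more precisely $\le C\max_T h_T^2\sum_T h_T^{-3}\|e_0-e_b\|_{\pT}^2 \le Ch^2\3bar e_h\3bar_{W_h}^2$ after accounting for scaling), and for $\|\nabla_w e_h\|$ use the commutativity \eqref{l} together with $\nabla_w u_h = {\cal Q}_h^{(k-1)}(\nabla u_h)$ — or more simply bound $\|\nabla_w e_h\|_T$ via the error estimate machinery, ultimately relating it to $\3bar e_h\3bar_{W_h}$ and $\|e_0\|$ through the scaled inverse inequality $\|\nabla e_0\|_T \le Ch_T^{-1}\|e_0\|_T$. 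This last step — getting a clean bound on $\|\nabla_w e_h\|$ or $\|\nabla e_0\|$ of order $h^{k+s-1}$ — is the main obstacle, since a naive inverse inequality $\|\nabla e_0\|\le Ch^{-1}\|e_0\|$ gives exactly $h^{k+s-1}$ from the $\|e_0\|$ bound, matching \eqref{eg}, so the key is to verify no spurious lower-order term enters. Assembling all contributions and using the bounds from Theorems \ref{theoestimate} and \ref{Thm:L2errorestimate} then produces \eqref{eg}.
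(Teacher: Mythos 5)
Your proposal is correct and follows essentially the same route as the paper: split $e_b$ and $e_n$ by the triangle inequality into a stabilizer-controlled piece plus an interior piece, then use the trace inequality \eqref{x} (and, for $e_n$, the inverse inequality $\|\nabla e_0\|_T\le Ch_T^{-1}\|e_0\|_T$) to reduce everything to $\|e_0\|$ and $\3bar e_h\3bar_{W_h}$, and finally insert Theorems \ref{theoestimate} and \ref{Thm:L2errorestimate}. The detour you contemplate through $\nabla_w e_h$ and \eqref{ehe0} is unnecessary; the direct inverse inequality is exactly what the paper uses and already yields the stated $h^{k+s-1}$ rate for $\|e_n\|$.
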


\begin{proof} On each element $T\in\T_h$, we have from the triangle
inequality that
$$
\|e_b\|_{\pT} \leq \|e_0\|_{\pT} + \|e_b-e_0\|_\pT.
$$
Thus, by (\ref{EQ:eb-eg-L2norm}), and the trace inequality (\ref{x}), we obtain
\begin{equation*}
\begin{split}
\|e_b\|^2 = \sum_{T\in\T_h} h_T \|e_b\|_{\pT}^2 & \leq C \sum_{T\in\T_h}
h_T\|e_0\|_{\pT}^2 + C h^4 \sum_{T\in\T_h}
h_T^{-3}\|e_b-e_0\|_\pT^2\\
& \leq C (\|e_0\|^2_0 + h^4 \3bar e_h\3bar_{W_h}^2),
\end{split}
\end{equation*}
which, together with the error estimates (\ref{erres}) and
(\ref{e0}), gives rise to (\ref{eb}).

To derive (\ref{eg}),   applying the same approach to the error
component $e_n$  by using triangle inequality, trace inequality  (\ref{x}) and inverse inequality gives
\begin{equation*}
\begin{split}
	\|e_n\| ^2
 = & \sum_{T\in\T_h} h_T \|e_n\|_{\pT}^2                                                                                                                 \\
	\leq          & \sum_{T\in\T_h} h_T \|(a\nabla e_0+\bb e_0)\cdot \bn-e_n\|_{\pT}^2 +h_T \|(a\nabla e_0+\bb e_0)\cdot \bn \|_{\pT}^2                                 \\
	\leq          & Ch^2\sum_{T\in\T_h} h_T^{-1} \|(a\nabla e_0+\bb e_0)\cdot \bn-e_n\|_{\pT}^2 +  \sum_{T\in\T_h}(h_T \|a\nabla e_0\|^2_{\pT}+h_T\|\bb e_0 \|_{\pT}^2) \\
	\leq          & C (h^2 \3bar e_h\3bar_{W_h}^2+  \sum_{T\in\T_h}
(h_T h_T^{-3} \|e_0\|^2_{T}+h_T h_T^{-1}\|e_0 \|_{T}^2))                                            \\
	\leq          & C(h^2 \3bar e_h\3bar_{W_h}^2+   
h^{-2} \|e_0\|^2),
\end{split}
\end{equation*}
which, together with the error estimates (\ref{erres}) and
(\ref{e0}), gives rise to the error estimate (\ref{eg}).
\end{proof}

\section{Numerical Results}
Two types of domains are considered in the numerical experiment: (1) an unit square domain $\Omega_1=[0,1]^2$, and (2) a L-shaped domain $\Omega_2$ with vertices $(0, 0), (0.5, 0), (0.5, 0.5), (1, 0.5), (1,1), (0, 1)$. In all the computation, the finite element partition $\T_h$ is obtained through a successive uniform refinement of a coarse triangulation of the domain $\Omega$ by dividing each coarse element into four congruent sub-elements by connecting the mid-points of the three edges of the triangle. The right-hand side function $f$, the Dirichlet boundary data $g_1$ and the Neumann boundary data $g_2$ are set correspondingly. For simplicity, the parameters in the PDWG numerical scheme \eqref{32}-\eqref{2} are chosen as $\tau_1=\tau_2=1$. 
    
The finite element spaces for the primal variable $u_h$ and the dual variable $\lambda_h$ are given by
\begin{equation*}
W_{k, h}=\{u_h=\{u_0, u_b, u_n\}: \ u_0\in P_k(T), u_b\in P_k(e), u_n\in P_{l}(e),   \forall e\subset\pT, \forall T\in {\cal T}_h\},
\end{equation*}
\begin{equation*}
M_{k, h}=\{\lambda_h: \ \lambda_h|_T \in P_k(T),\ \forall T\in {\cal T}_h\},
\end{equation*}
where  $l=k$ or $l=k-1$. The primal-dual weak Galerkin scheme (\ref{32})-(\ref{2}) is implemented for the case of $k=1$ and $k=2$.
  
Denote by $e_h=\{e_0, e_b, e_n\}=u_h-Q_h u$ the error function. The following $L^2$ norms are used to measure the errors:
$$
  \|e_0\|  = \Big(\sum_{T\in {\cal T}_h} \int_T e_0^2 dT\Big)^{\frac{1}{2}},
 \qquad 
  \|\nabla e_0\|  = \Big(\sum_{T\in {\cal
  		T}_h} \int_T ( \nabla e_0 ) ^2 dT\Big)^{\frac{1}{2}},
$$
$$
   \|e_b\|  = \Big(\sum_{T\in {\cal T}_h}h_T \int_{\partial T}  e_b^2 ds\Big)^{\frac{1}{2}},\qquad   
  \|e_n\|  = \Big(\sum_{T\in {\cal T}_h}h_T \int_{\partial T}  e_n^2 ds\Big)^{\frac{1}{2}}.
$$

{\bf Test Example 1 (Constant diffusion $a$ and convection $\bb$).}
 The diffusion tensor $a\in \mathbb R^{2\times 2}$  and the convection tensor ${\bf b}\in \mathbb R^2$ are taken by constants as follows:
\[
  a_{11}=1,\ \ a_{12}=a_{21}=1,\ \ a_{22}=6;\ \ b_1=1, \ \ b_2=1.
\]
The exact solution is given by $u(x, y)=\sin(\pi x)\sin(\pi y)$. The domain the unit square domain $\Omega_1$. The Neumann boundary is $\Gamma_N=\{(0, y): y\in [0,1]\}$, and the rest of the boundary is of Dirichlet. 
   
 Tables \ref{1}-\ref{122} demonstrate the approximation errors and the corresponding convergence rates for $k=1$ and $k=2$ with $l=k$ and $l=k-1$, respectively.
For the case of $l=k$, we observe from Table  \ref{1} that the convergence orders for $e_0$ and $e_b$ in the discrete $L^2$ norm are both of an optimal order ${\cal O}(h^{k+1})$, and the convergence order for $e_n$ in the discrete $L^2$ norm is of an optimal order ${\cal O}(h^{k})$, for  $k=1$ and $k=2$ respectively, which are all consistent with the theoretical results in Theorems \ref{Thm:L2errorestimate} -  \ref{Thm:L2errorestimate-ub}. For the case of $l=k-1$, we can see from Table \ref{122} that  
  the convergence rates for $e_0$ and $e_b$ in the discrete $L^2$ norm are of an order ${\cal O}(h^{k+1})$, and the convergence rate for $e_n$ in the discrete $L^2$ norm is of an order ${\cal O}(h^{k})$ for $k=1$ and $k=2$, respectively. Note that for the case of $l=k-1$ and $k=2$, the convergence rates for $\|e_0\|, \|e_b\|$ and $\|e_n\|$ are consistent with the theoretical results developed in  Theorems \ref{Thm:L2errorestimate}
- \ref{Thm:L2errorestimate-ub}; while for the case of $l=k-1$ and $k=1$, the convergence rates for $\|e_0\|, \|e_b\|$ and $\|e_n\|$ are of an order which is 1 order higher than the expected convergence order given by \eqref{e0} and \eqref{eb}-\eqref{eg}, respectively.

\begin{table}[htbp]\caption{Various errors and corresponding convergence rates  for $k=1,2$ with $l=k$ on  $\Omega_1$.}\label{1}\centering
\begin{threeparttable}
        \begin{tabular}{c |c |c | c | c | c | c | c | c | c  |}
        \hline
    & $1/h$ &   $\|e_b\|$ & rate& $\|e_n\|$  & rate & $\|\nabla e_0\|$ & rate & $\|e_0\|$ & rate \\
       \hline \cline{2-10}
  &4& 3.48e-1 &         --   &7.88e-0&         --   &7.53e-1&       --  & 7.97e-2 &      -- \\
   &8&  8.72e-2  & 2.00  & 3.75e-0 &  1.07  & 3.38e-1 &  1.16   &2.10e-2  & 1.92\\
$k=1$   &16&2.01e-2   &2.11  & 1.72e-0  & 1.12   &1.56e-1  & 1.11  & 4.92e-3  & 2.09\\
  &32& 4.77e-3  & 2.08 &  8.31e-1   &1.05  & 7.58e-2   &1.04 &  1.16e-3  & 2.08\\
  &64& 1.17e-3   &2.02&   4.11e-1  & 1.02 &  3.74e-2  & 1.02&   2.85e-4  & 2.02\\
\hline
& $1/h$ &   $\|e_b\|$ & rate& $\|e_n\|$  & rate & $\|\nabla e_0\|$ & rate & $\|e_0\|$ & rate \\
       \hline \cline{2-10}
 &   2& 1.59e-1  &   --  & 6.89e-0 &        --  & 7.90e-1 &  -- &  9.94e-2  &       -- \\
     &   4& 2.87e-2  & 2.47  & 1.43e-0  & 2.27  & 2.28e-1  &  1.79 &  1.63e-2 &  2.61 \\
 $k=2$ & 8&  3.63e-3 &  2.98  & 3.48e-1  & 2.03 &  6.28e-2  & 1.86 &  2.28e-3  & 2.84 \\
   &     16& 4.60e-4  & 2.98 &  8.79e-2 &  1.99 &  1.65e-2 &  1.93  & 3.01e-4  & 2.92  \\
  &    32& 5.89e-5 &  2.96  &  2.21e-2  & 1.99 &  4.22e-3 &  1.97 &  3.86e-5 &  2.96  \\
\hline
 \end{tabular}
 \end{threeparttable}
\end{table}

\begin{table}[htbp]\caption{Various errors and corresponding convergence rates  for $k=1,2$ with $l=k-1$ on  $\Omega_1$.}\label{122}\centering
\begin{threeparttable}
        \begin{tabular}{c |c |c | c | c | c | c | c | c | c  |}
        \hline
 & $1/h$ &   $\|e_b\|$ & rate& $\|e_n\|$  & rate & $\|\nabla e_0\|$ & rate & $\|e_0\|$ & rate \\
       \hline \cline{2-10}
  &4&3.77e-1  &     -- & 7.57e-0 &     -- & 7.19e-1&      --   &9.30e-2&      --\\
  &8& 9.77e-2  & 1.95  & 3.59e-0 &  1.07  & 3.28e-1&   1.13  &2.49e-2&   1.90\\
 $k=1$ &16& 2.46e-2   &1.99 & 1.73e-0  & 1.05   &1.56e-1 &  1.07  & 6.27e-3 &  2.00\\
  &32& 6.16e-3   &2.00  & 8.54e-1   &1.02  & 7.65e-2 &  1.03 &  1.57e-3  & 2.00\\
  &64& 1.54e-3  & 2.00 &  4.24e-1  & 1.01 &  3.79e-2 &  1.01&   3.91e-4   &2.00\\
\hline
 & $1/h$ &   $\|e_b\|$ & rate& $\|e_n\|$  & rate & $\|\nabla e_0\|$ & rate & $\|e_0\|$ & rate \\
       \hline \cline{2-10}
  &2& 2.16e-1 &        --  & 6.00e-0&      -- & 8.56e-01 &   --   &1.12e-1&      -- \\
  &4& 3.39e-2  & 2.67   &1.42e-0 &  2.08   &2.44e-01  & 1.81 & 1.75e-2 &  2.67\\
$k=2$  &8& 4.08e-3   &3.05  & 3.54e-1  & 2.00  & 6.54e-2   &1.90   &2.39e-3 &  2.87\\
  &16& 4.88e-4  & 3.06 &  8.90e-2   &1.99 &  1.69e-2  & 1.96  & 3.08e-4  & 2.95\\
  &32& 6.05e-5   &3.01&   2.23e-2  & 2.00&   4.26e-3   &1.98 &  3.90e-5   &2.98\\
 \hline
 \end{tabular}
 \end{threeparttable}
\end{table}

{\bf Test Example 2 (Continuous diffusion $a$ and convection $\bb$).} We choose the diffusion tensor $a \in \mathbb{R}^{2\times 2}$ and the convection tensor ${\bf b} \in \mathbb{R}^{2}$ in the model problem \eqref{model}  as continuous functions as follows: 
  \[
     a_{11}=1+x,\ \ a_{12}=a_{21}=0,\ \  a_{22}=1+y;\ \   b_1=e^{1-x}, \ \ b_2=e^{xy}.
  \]
 The Neumann boundary is $\Gamma_N=\{(0,y): y\in [0,1]\}$ and the Dirichlet boundary is $\Gamma_D=\partial\Omega\setminus
     \Gamma_N$.  The exact solution is given by $u(x,y)=\sin(x)\cos( y)$. The domain is the unit square $\Omega_1$.

Tables \ref{3}-\ref{4} demonstrate the numerical errors and the convergence rates arising from the PDWG scheme (\ref{32})-(\ref{2}) for the convection-diffusion model problem \eqref{model}. We observe from Tables \ref{3}-\ref{4} that the numerical performance is the same as those in Tables \ref{1}-\ref{122}.  

\begin{table}[htbp]\caption{Various errors and corresponding convergence rates  for $k=1,2$ with $l=k$ on $\Omega_1$.}\label{3}\centering
\begin{threeparttable}
        \begin{tabular}{c |c |c | c | c | c | c | c | c | c  |}
        \hline
    & $1/h$ &   $\|e_b\|$ & rate& $\|e_n\|$  & rate & $\|\nabla e_0\|$ & rate & $\|e_0\|$ & rate \\
       \hline \cline{2-10}
    &4&7.65e-3   &     --  & 2.0277e-01    &   --  & 4.60e-2   &   --  & 2.34e-3    &  --\\
       &8& 1.96e-3 &  1.96 & 9.7447e-02  & 1.06&   2.22e-2 &  1.05 &  5.92e-4  & 1.98 \\
 $k=1$ &16& 4.96e-4 &  1.99 &  4.7834e-02  & 1.03 &  1.10e-2 &  1.02 &  1.48e-4 &  2.00 \\
      &32& 1.24e-4 &  2.00 &  2.3696e-02  & 1.01 &  5.46e-3 &  1.01 &  3.69e-5 &  2.00\\
     &64& 3.10e-5 &  2.00 &  1.1792e-02 &  1.01 &  2.72e-3  & 1.00&  9.21e-6 &  2.00\\
 \hline
   & $1/h$ &   $\|e_b\|$ & rate& $\|e_n\|$  & rate & $\|\nabla e_0\|$ & rate & $\|e_0\|$ & rate \\
       \hline \cline{2-10}
    &2&  1.06e-2  &   --  & 1.27e-1  &   --  & 3.22e-2 &     --  & 4.33e-3  &   -- \\
      &4&  9.07e-4  & 3.54  & 2.57e-2 &  2.31 &  7.19e-3 &  2.16  & 5.03e-4  & 3.10\\
$k=2$ &8&  9.04e-5  & 3.33  & 6.07e-3 &  2.08 & 1.75e-3 &  2.04  & 6.16e-5  & 3.03\\
      &16&  9.86e-6  & 3.20  & 1.48e-3  & 2.03 &  4.33e-4 &  2.01  & 7.64e-6  & 3.01\\
      &32&  1.14e-6  & 3.11  & 3.67e-4 &  2.01 &  1.08e-4 &  2.00  & 9.53e-7 &  3.00\\
   \hline
 \end{tabular}
 \end{threeparttable}
\end{table}

\begin{table}[htbp]\caption{Various errors and corresponding convergence rates  for $k=1,2$ with $l=k-1$ on  $\Omega_1$.}\label{4}\centering
\begin{threeparttable}
        \begin{tabular}{c |c |c | c | c | c | c | c | c | c  |}
        \hline
 & $1/h$ &   $\|e_b\|$ & rate& $\|e_n\|$  & rate & $\|\nabla e_0\|$ & rate & $\|e_0\|$ & rate \\
       \hline \cline{2-10}
  &4& 2.79e-2 &   --  & 4.69e-1  &   --  & 8.16e-2 &   --  & 9.72e-3 &   -- \\
       &8& 6.53e-3 &  2.10  & 2.20e-1  & 1.09  & 2.87e-2 &  1.51  & 1.81e-3 &  2.42\\
$k=1$  &16& 1.60e-3 &  2.03 &  1.07e-1  & 1.04  & 1.20e-2 &  1.26  & 3.98e-4 &  2.18 \\
       &32& 3.97e-4 &  2.01  & 5.32e-2  & 1.01  & 5.62e-3 &  1.09  & 9.60e-5 &  2.05 \\
       &64& 9.90e-5 &  2.00  & 2.65e-2  & 1.00  & 2.75e-3  & 1.03  & 2.38e-5 &  2.01 \\
 \hline
   & $1/h$ &   $\|e_b\|$ & rate& $\|e_n\|$  & rate & $\|\nabla e_0\|$ & rate & $\|e_0\|$ & rate \\
       \hline \cline{2-10}
     &2& 1.14e-2 &  -- &  1.69e-1 &  -- &  3.55e-2 &  -- & 4.96e-3  & --\\
       &4& 1.10e-3 &  3.37 &  3.89e-2 &  2.11 &  7.54e-3 &  2.24  & 5.49e-4 &  3.17\\
 $k=2$ &8& 1.18e-4 &  3.22 &  9.49e-3 &  2.04 &  1.79e-3 &  2.08  & 6.59e-5  & 3.06\\
       &16& 1.37e-5 &  3.11 &  2.35e-3 &  2.01 &  4.39e-4 &  2.02  & 8.14e-6  & 3.02\\
       &32& 1.65e-06 &  3.05&   5.85e-4 &  2.01&   1.09e-4&   2.01 &  1.01e-6 &  3.01\\
\hline
 \end{tabular}
 \end{threeparttable}
\end{table}

{\bf Test Example 3 (Convection-dominated diffusion problem).} We consider the diffusion tensor $a \in \mathbb{R}^{2\times 2}$ and the convection tensor ${\bf b} \in \mathbb{R}^{2}$ given by
\[
     a_{11}=a_{12}=\epsilon,\ \ a_{12}=a_{21}=0, \ \  b_1=1, \ \ b_2=1,
\]
where $\epsilon$ assumes some small and positive constants. The exact solution is given by $u(x, y)=(x+0.5)(y+0.5)e^{1-x}e^{y}$ with the full Dirichlet boundary condition $\Gamma_D=\partial\Omega_1$.

Tables \ref{6}-\ref{7} show the approximation errors and the convergence rates for the convection-dominated diffusion problem with $\epsilon=10^{-10}$ on the unit square domain $\Omega_1$. For the case of $l=k$ and $k=1$, we observe from Table \ref{6}  that  the convergence rates for the errors $e_0$, $e_b$ and $e_n$ in the discrete $L^2$-norm are of an order ${\cal O}(h^2)$, respectively, which are consistent with the theory for both $\|e_0\|$ and $\|e_b\|$; and are of one order higher than the expected order ${\cal O}(h)$ for $\|e_n\|$. For the case of $l=k-1$ with $k=1$, the convergence rates of  $e_0$, $e_b$, and $e_n$ in the discrete $L^2$-norm shown in Table \ref{7}  are all of order ${\cal O}(h)$, which are consistent with the expected order for both $\|e_0\|$ and $\|e_b\|$; and is of one order higher than the expected convergence rate given by \eqref{eg} for $\|e_n\|$.  
For the case of $l=k$ and $k=2$, we observe from Table \ref{6} that  the convergence rates for the errors $e_0$, $e_b$ and $e_n$ in the discrete $L^2$-norm are of order ${\cal O}(h^2)$, which are consistent with the theory for $\|e_n\|$; and are of one order lower than the expected optimal order ${\cal O}(h^{3})$ for both $\|e_0\|$ and $\|e_b\|$.  For the case of $l=k-1$ and $k=2$, we see from Table \ref{7} that  the convergence rates for the errors $e_0$, $e_b$ and $e_n$ in the discrete $L^2$-norm are of  order ${\cal O}(h^2)$, which are consistent with the theory for $\|e_0\|$ and $\|e_b\|$; and are of one order higher than the expected optimal order ${\cal O}(h)$ for  $\|e_n\|$. 
 
Table \ref{8-1} shows the approximation errors and the convergence rates for $\epsilon=10^{-2}$ on the unit square domain $\Omega_1$ when $k=2$ is employed. 
For the case of $l=k$, we observe from Table \ref{8-1} that the convergence rates for the errors $e_0$ and $e_b$ in the discrete $L^2$-norm are of optimal order ${\cal O}(h^3)$, which is consistent with the theory; and  the convergence rate for the error  $e_n$ in the discrete $L^2$-norm is one order higher than the expected optimal order ${\cal O}(h^2)$, which outperforms the theory. 
For the case of $l=k-1$, the convergence rates of  $e_0$, $e_b$ and $e_n$ in the discrete $L^2$-norm are of one order higher than the expected optimal order, which are better than the theory. It can be seen that the convergence
rates are improved and the theoretical results in Theorems \ref{Thm:L2errorestimate}-\ref{Thm:L2errorestimate-ub} are recovered for the case of $\epsilon=10^{-2}$. The results indicate that the diffusion coefficient  $a$ has an influence on the convergence rate for $k=2$.
 
Tables \ref{6-1}-\ref{7-1} show the  approximation errors and  corresponding convergence rates for $k=1, 2$ with $l=k$ and $l=k-1$, respectively, on the L-shaped domain $\Omega_2$. In both the case of $l=k$ for $k=1, 2$ and $l=k-1$ for $k=2$, we observe a $(k+1)$-th order of convergence for  $\|e_0\|$ and $\|e_b\|$, and a $k$-th order of convergence for $\|e_n\|$, which are consistent with our  theoretical results established in Theorems \ref{Thm:L2errorestimate}-\ref{Thm:L2errorestimate-ub}. As for the case of $l=k-1$ and $k=1$,  we observe from Table \ref{7-1} that, the convergence rates of  $\|e_0\|$, $\|e_b\|$, $\|e_n\|$ are all of order ${\cal O}(h)$. Note that the convergence results for $\|e_0\|$ and $\|e_b\|$ for the case of $l=k-1$ and $k=1$ are consistent with the theoretical findings in Theorems \ref{Thm:L2errorestimate}-\ref{Thm:L2errorestimate-ub}; while for $\|e_n\|$, the convergence rate is $1$ order higher than the error estimate given by \eqref{eg}.

  \begin{table}[htbp]\caption{Various errors and corresponding convergence rates  for $k=1,2$ with $l=k$ and $\epsilon=10^{-10}$ on  $\Omega_1$.}\label{6}\centering
\begin{threeparttable}
        \begin{tabular}{c |c |c | c | c | c | c | c | c | c  |}
        \hline
 & $1/h$ &   $\|e_b\|$ & rate& $\|e_n\|$  & rate & $\|\nabla e_0\|$ & rate & $\|e_0\|$ & rate \\
       \hline \cline{2-10}
      &4&  9.52e-2&    -- &  1.11e-1 &     --&  7.76e-01 &      --  & 2.78e-2 &     --   \\
       &8& 2.64e-2  & 1.85  & 2.99e-2  & 1.90 &  4.07e-01 &  0.93  & 7.53e-3  & 1.88\\
$k=1$  &16& 6.77e-3 &  1.96  & 7.65e-3  & 1.96  & 2.06e-01 &  .098  & 1.92e-3  & 1.97\\
       &32& 1.70e-3 &  1.99  & 1.93e-3  & 1.99 & 1.03e-01 &  1.00  & 4.83e-4  & 1.99\\
       &64&  4.27e-4 &  2.00  & 4.84e-4  & 2.00  & 5.18e-02 &  1.00  & 1.21e-4  & 2.00 \\
\hline
& $1/h$ &   $\|e_b\|$ & rate& $\|e_n\|$  & rate & $\|\nabla e_0\|$ & rate & $\|e_0\|$ & rate \\
       \hline \cline{2-10}
     &2& 3.73e-2 &    -- &  5.77e-2 &      -- &  2.27e-1 &     -- &  1.28e-2 &  --\\
      &4& 6.65e-3 &  2.49&  9.50e-3 &  2.60 &  6.44e-2 &  1.82 &  2.17e-3  & 2.55\\
$k=2$ &8&  1.36e-3  & 2.29&   1.86e-3  & 2.35&  1.97e-2 &  1.71 &  4.64e-4 &  2.23\\
    &16&  3.13e-4 &  2.11 &  4.23e-4 &  2.14 &  7.38e-3  & 1.41 &  1.10e-4 &  2.08\\
   &32&  7.61e-5 &  2.04 &  1.03e-4 &  2.04 &  3.30e-3 &  1.16 &  2.69e-5 &  2.03\\
\hline
 \end{tabular}
 \end{threeparttable}
\end{table}

 \begin{table}[htbp]\caption{Various errors and corresponding convergence rates  for $k=1,2$ with $l=k-1$ and $\epsilon=10^{-10}$ on  $\Omega_1$.}\label{7}\centering
\begin{threeparttable}
        \begin{tabular}{c |c |c | c | c | c | c | c | c | c  |}
        \hline
 & $1/h$ &   $\|e_b\|$ & rate& $\|e_n\|$  & rate & $\|\nabla e_0\|$ & rate & $\|e_0\|$ & rate \\
       \hline \cline{2-10}
      &4&  2.96e-01&      -- &  1.54e-0 &    --&  1.09e-0 &      --&  1.09e-1 &  -- \\
      &8&  1.80e-01&   0.73 &  7.93e-1 &  0.96 &  5.62e-1 &  0.95 &  6.43e-2 &  0.75\\
$k=1$ &16&  1.06e-01&   0.76 &  4.06e-1&   0.97 &  3.25e-1 &  0.79 &  3.77e-2 &  0.77\\
      &32&  5.86e-02&   0.85 &  2.06e-1 &  0.98 &  2.05e-1 &  0.67 &  2.08e-2 &  0.86\\
      &64&  3.10e-02 &  0.92 &  1.04e-1 &  0.99 &  1.35e-1 &  0.66&  1.10e-2 &  0.92\\
\hline
& $1/h$ &   $\|e_b\|$ & rate& $\|e_n\|$  & rate & $\|\nabla e_0\|$ & rate & $\|e_0\|$ & rate \\
       \hline \cline{2-10}
         &2&   9.82e-2 &   -- &   2.77e-1  &     -- &   4.25e-1&    -- &  3.98e-2  &   --\\
        &4&   2.81e-2  &  1.80 &  7.47e-2  &  1.89  &   2.07e-1&   1.04 &  1.04e-2  &  1.94 \\
 $k=2$  &8&  7.93e-3   & 1.82 &  1.93e-2  &  1.95   &   1.14e-1&   0.86  &  2.84e-3  &  1.86 \\
        &16&  2.11e-3 &   1.91 &   4.92e-3  &  1.98 &   6.13e-2&   0.89 &   7.59e-4 &   1.91\\
        &32&   5.44e-4 &   1.96 &   1.24e-3  &  1.99 &   3.20e-2&  0.94 &   1.97e-4 &   1.95\\

\hline
 \end{tabular}
 \end{threeparttable}
\end{table}

\begin{table}[htbp]\caption{Various errors and corresponding convergence rates  for $k=2$ with  $\epsilon=10^{-2}$ on  $\Omega_1$.}\label{8-1}\centering
\begin{threeparttable}
        \begin{tabular}{c |c |c | c | c | c | c | c | c | c  |}
        \hline
 & $1/h$ &   $\|e_b\|$ & rate& $\|e_n\|$  & rate & $\|\nabla e_0\|$ & rate & $\|e_0\|$ & rate \\
       \hline \cline{2-10}
      &2&  4.31e-02 &    -- &  6.79e-2  &    -- &  2.44e-1  &   -- &  1.59e-2 &     --\\
      &4&  6.18e-03 &  2.80 &  8.97e-3 &  2.92 &  6.47e-2  & 1.92 &  2.02e-3 &  2.98\\
$l=k$ &8&  7.98e-04 &  2.95 &  1.25e-3 &  2.84 &  1.73e-2 &  1.90 &  2.55e-4  & 2.99\\
      &16&  9.42e-05 &  3.08 &  1.94e-4  & 2.69 &  4.31e-3 &  2.00 &  3.12e-5  & 3.03\\
      &32&  1.11e-05 &  3.08 &  3.56e-5  & 2.44 &  1.03e-3 &  2.06 &  3.72e-6  & 3.07\\
\hline
 & $1/h$  &   $\|e_b\|$ & rate& $\|e_n\|$  & rate & $\|\nabla e_0\|$ & rate & $\|e_0\|$ & rate \\
       \hline \cline{2-10}
       &4& 2.71e-2 &    --&  7.51e-2  &    -- &  1.96e-1 &     -- &  1.02e-2 &   --   \\
        &8& 7.01e-3 &  1.95 &  1.94e-2 &  1.96 &  9.48e-2 &  1.05 &  2.64e-3 &  1.95 \\
 $l=k-1$ &16& 1.44e-3 &  2.29 &  4.77e-3  & 2.02 &  3.63e-2 &  1.38 &  5.59e-4 &  2.24 \\
         &32& 2.05e-4 &  2.81 &  1.13e-3  & 2.07 &  9.36e-3 &  1.96  & 8.21e-5 &  2.77 \\
        &64& 1.97e-5 &  3.38 &  2.73e-4  & 2.05 &  1.64e-3 &  2.51 &  8.01e-6 &  3.36 \\
 \hline
 \end{tabular}
 \end{threeparttable}
\end{table}

\begin{table}[htbp]\caption{Various errors and corresponding convergence rates  for $k=1,2$ with $l=k$ and $\epsilon=10^{-2}$ on  $\Omega_2$.}\label{6-1}\centering
\begin{threeparttable}
        \begin{tabular}{c |c |c | c | c | c | c | c | c | c  |}
        \hline
 & $1/h$ &   $\|e_b\|$ & rate& $\|e_n\|$  & rate & $\|\nabla e_0\|$ & rate & $\|e_0\|$ & rate \\
       \hline \cline{2-10}
 & 4 &1.00e-1 &      --  & 1.32e-1  &   -- &  7.45e-1  &  --  & 3.23e-2 &     -- \\
 & 8 &3.35e-2 &  1.58  & 4.57e-2  & 1.53 &  3.97e-1  &0.90 & 1.12e-2 & 1.53 \\
$k=1$& 16& 1.02e-2 &  1.72  & 1.47e-2 &  1.63 &  1.97e-1  & 1.01&  3.56e-3 & 1.66 \\
 & 32&  2.58e-3&   1.99&  4.12e-3 &  1.84 &  9.60e-2 & 1.03 & 9.18e-4 & 1.96 \\
& 64 & 6.43e-4 &  2.00 &  1.32e-3 &  1.64 &  4.76e-2 &  1.01   &2.30e-4 & 2.00 \\
 \hline
  & $1/h$ &   $\|e_b\|$ & rate& $\|e_n\|$  & rate & $\|\nabla e_0\|$ & rate & $\|e_0\|$ & rate \\
       \hline \cline{2-10}
& 4  & 6.03e-3 &        --&  8.94e-3 &     -- &  6.18e-2 &    --&   1.97e-3 &   --        \\
& 8  &8.85e-4  & 2.77  & 1.39e-3 &  2.68  & 1.72e-2  &1.85 & 2.85e-4  & 2.79 \\
$k=2$& 16 & 1.02e-4 &  3.12  & 2.02e-4 &  2.78 &  4.27e-3 & 2.01 &  3.23e-5  & 3.14\\
 & 32 & 1.15e-5 &  3.15 &  3.58e-5 &  2.50 &  1.02e-3 & 2.07 &  3.64e-6  & 3.15\\
& 64 & 1.38e-6 &  3.06 &  8.00e-6 &  2.16 &  2.50e-4 & 2.03 &  4.36e-7  &3.06\\
\hline
 \end{tabular}
 \end{threeparttable}
\end{table}

 \begin{table}[htbp]\caption{Various errors and corresponding convergence rates  for $k=1,2$ with $l=k-1$ and $\epsilon=10^{-2}$ on   $\Omega_2$.}\label{7-1}\centering
\begin{threeparttable}
        \begin{tabular}{c |c |c | c | c | c | c | c | c | c  |}
        \hline
 & $1/h$ &   $\|e_b\|$ & rate& $\|e_n\|$  & rate & $\|\nabla e_0\|$ & rate & $\|e_0\|$ & rate \\
       \hline \cline{2-10}
  & 4 & 1.89e-1 &     -- &   1.44e-0 &    --  & 9.87e-1&     --  & 7.02e-2 &     --\\
& 8 & 1.06e-1 &  0.83  & 7.33e-1&   0.98  & 4.91e-1 & 1.00 &  3.83e-2 &0.87  \\
$k=1$& 16 & 6.53e-2 & 0.72  & 3.72e-1&   0.98  & 2.71e-1 &0.86 &  2.33e-2 & 0.72  \\
& 32 & 3.52e-2 &  0.89  & 1.87e-1&   0.99  & 1.54e-1  &0.81 &  1.25e-2 & 0.90 \\
&64  & 1.54e-2 &  1.20  & 9.28e-2&   1.01  & 7.85e-2  &0.98 &  5.45e-3 & 1.20 \\
 \hline
 & $1/h $&   $\|e_b\|$ & rate& $\|e_n\|$  & rate & $\|\nabla e_0\|$ & rate & $\|e_0\|$ & rate \\
       \hline \cline{2-10}
&4 & 2.49e-2   &    --  &  7.00e-2    &   --   & 1.76e-1    &    --   & 9.52e-3   &     -- \\
& 8  & 6.56e-3  &  1.92  &  1.81e-2   & 1.95  &  8.61e-2   & 1.03 &  2.49e-3   & 1.93 \\
$k=2$& 16  & 1.34e-3  &  2.29 & 4.45e-3  &  2.02  &  3.32e-2  & 1.38 &  5.24e-4   & 2.25 \\
& 32  & 1.89e-4  &  2.83  &  1.05e-3  &  2.08   & 8.52e-3  & 1.96 & 7.54e-5  & 2.80\\
& 64  & 1.80e-5  &  3.39 &  2.54e-4  &  2.05   & 1.49e-3  & 2.51 &  7.25e-6  & 3.38 \\
\hline
 \end{tabular}
 \end{threeparttable}
\end{table}

In what follows of this section, we present the plot of the numerical solution $u_h$ arising from the primal-dual weak Galerkin scheme \eqref{32}-\eqref{2} for test problems for which the exact solution is not known. 

 {\bf Test Example 4.} The diffusion $a$ is given by $a_{11}=a_{22}=10^{-4}$, $a_{12}=a_{21}=0$;  the convection is set as ${\bf b}=(y, -x)^T$, the domain is an unit square domain $\Omega_1$; and the mixed boundary conditions are $g_1=\sin(3x)$ on the inflow boundary $\Gamma_D=\{(x,y): {\bf b}\cdot {\bf n} <0\}$ and $g_2=0$ on $\Gamma_N=\partial\Omega_1\setminus \Gamma_D$. Figure \ref{fig:2-1} presents the plots for the numerical solution $u_h$ obtained from the PDWG numerical method \eqref{32}-\eqref{2} with $k=1$ and $l=k-1$
for the convection-dominated  diffusion problem when different load functions  $f=1$ (left) and $f=0$ (right) are employed, respectively.

\begin{figure}[htb]
  \centering
  \includegraphics[width=.32\textwidth]{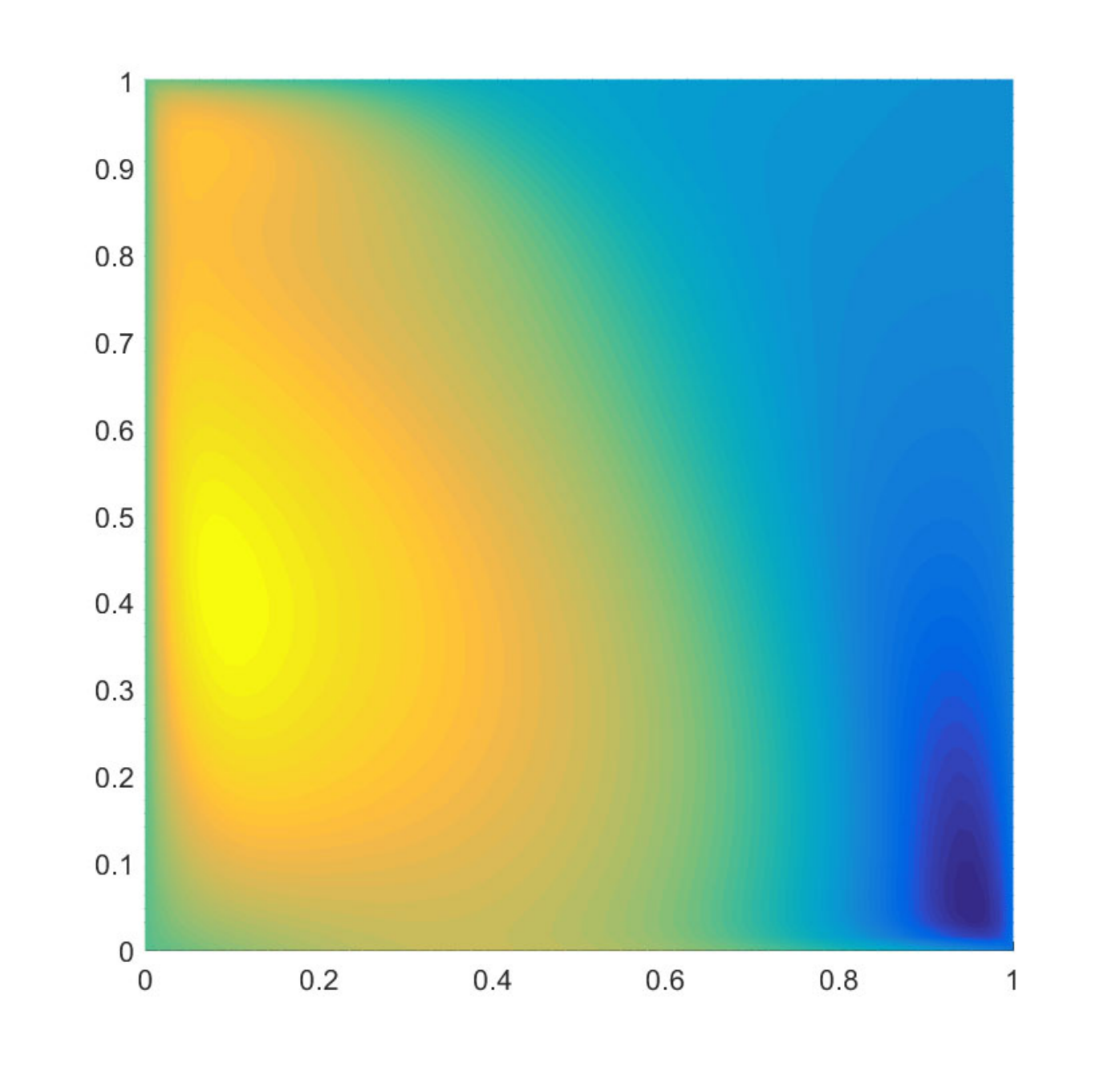}
  \includegraphics[width=.32\textwidth]{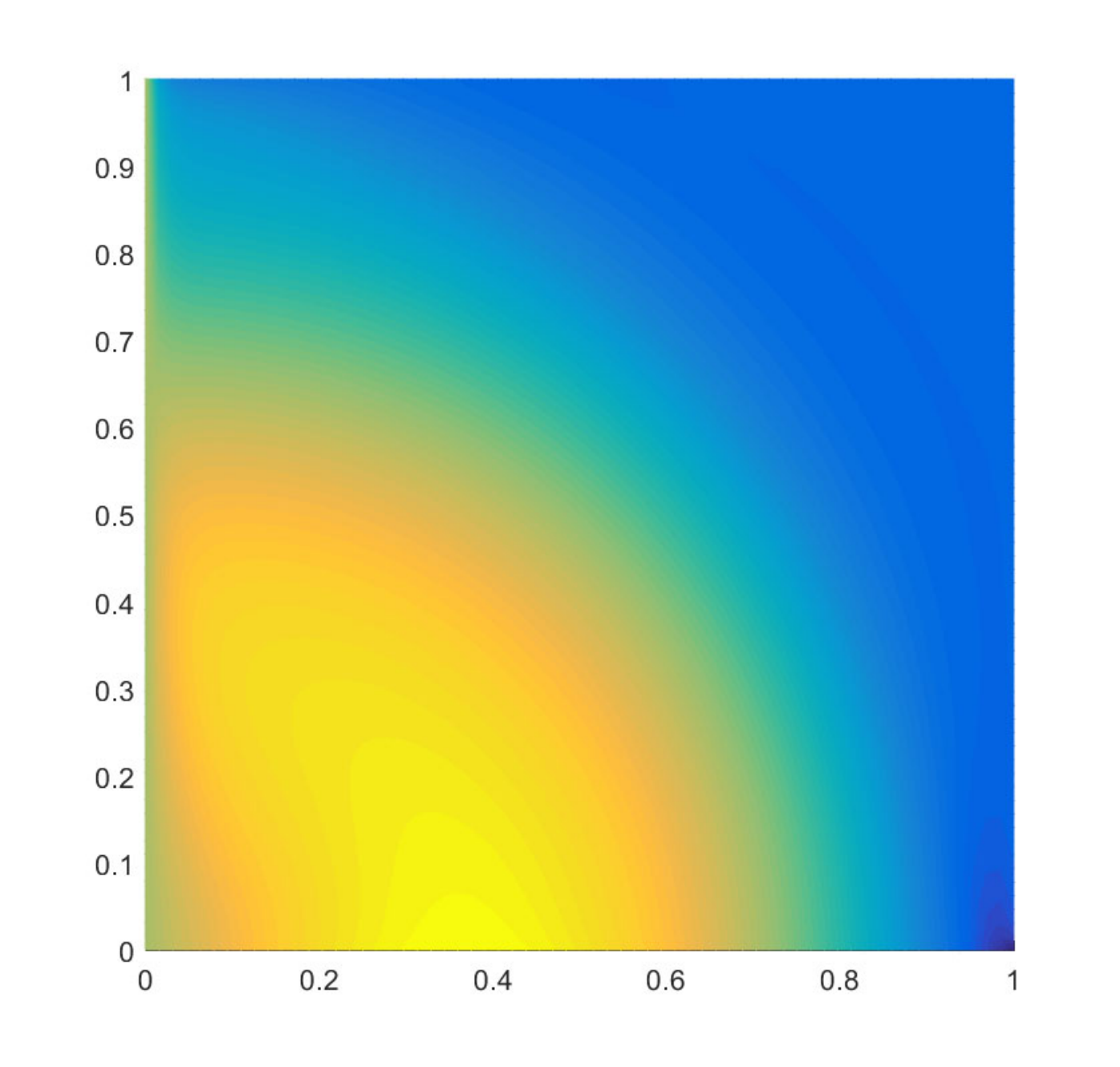}~
 \caption{Contour plots of the numerical solution $u_h$ on $\Omega_1$ for  the load functions $f=1$ (left) and $f=0$ (right).}
  \label{fig:2-1}
\end{figure}

 {\bf Test Example 5.} The diffusion is given by
 $a_{11}=a_{22}=10^{-5}, a_{12}=a_{21}=0$; and
 the convection vector is set as ${\bf b}=(y,-x)^T$. For the unit square domain $\Omega_1$, $\Gamma_N=\{(x,y): x=1\ {\rm or } \ y=0\}$; and for the L-shaped domain $\Omega_2$, $\Gamma_N=\{(x, y): x=1\  {\rm or  } \ y=0.5\}$. The mixed boundary conditions are $g_1=\sin(2x)$ on $\Gamma_D=\partial\Omega\setminus\Gamma_N$ and $g_2=0$ on $\Gamma_N$. We take $l=k-1$ and $k=1$. Figures \ref{fig:33-1}-\ref{fig:34-1} demonstrate the numerical solutions $u_h$ on the unit square domain $\Omega_1$ and the L-shaped domain $\Omega_2$ when different
load functions $f=0$ (left) and $f=1$ (right) are employed, respectively.

\begin{figure}[htb]
  \centering
  \includegraphics[width=.32\textwidth]{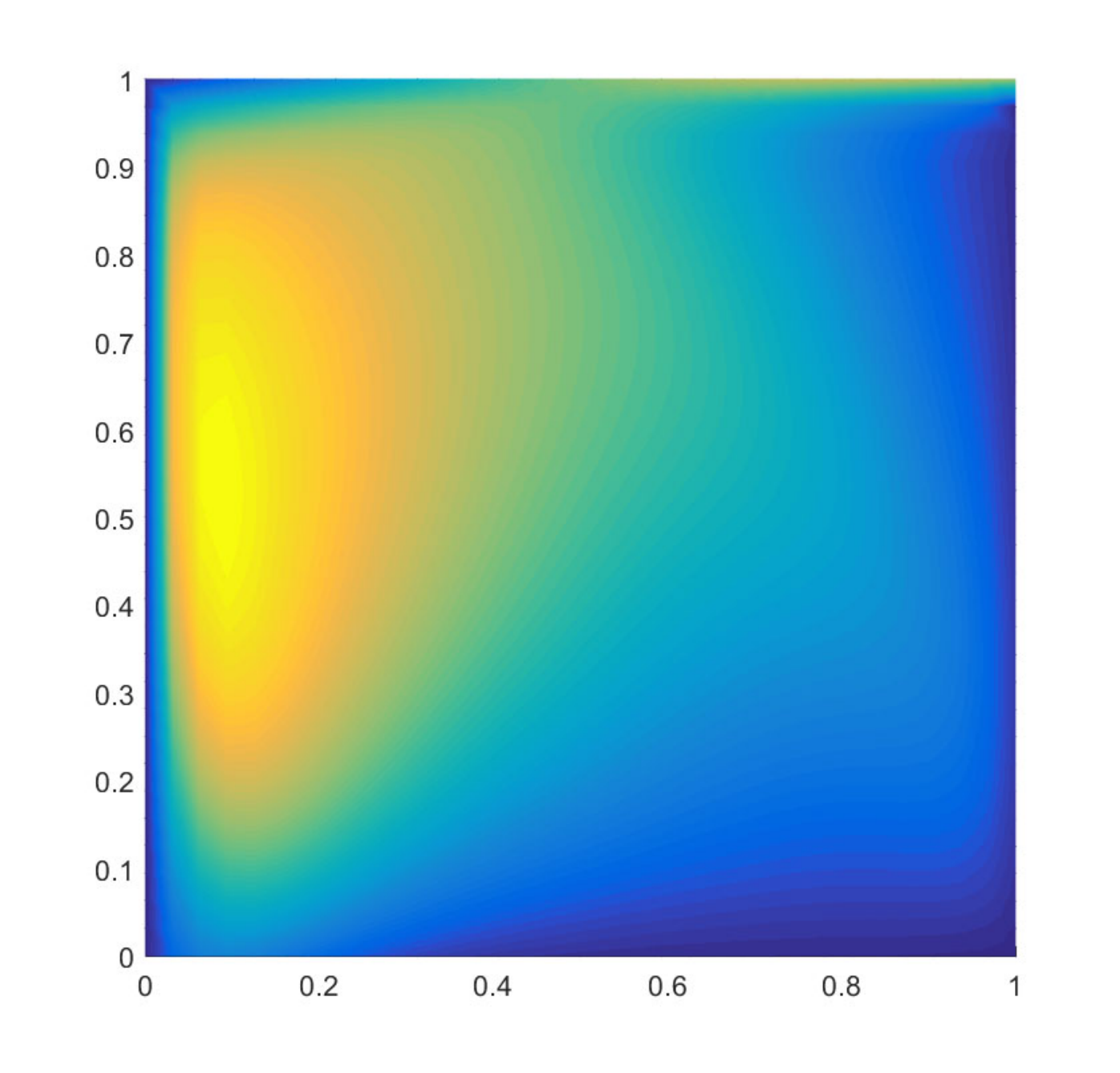}
  \includegraphics[width=.32\textwidth]{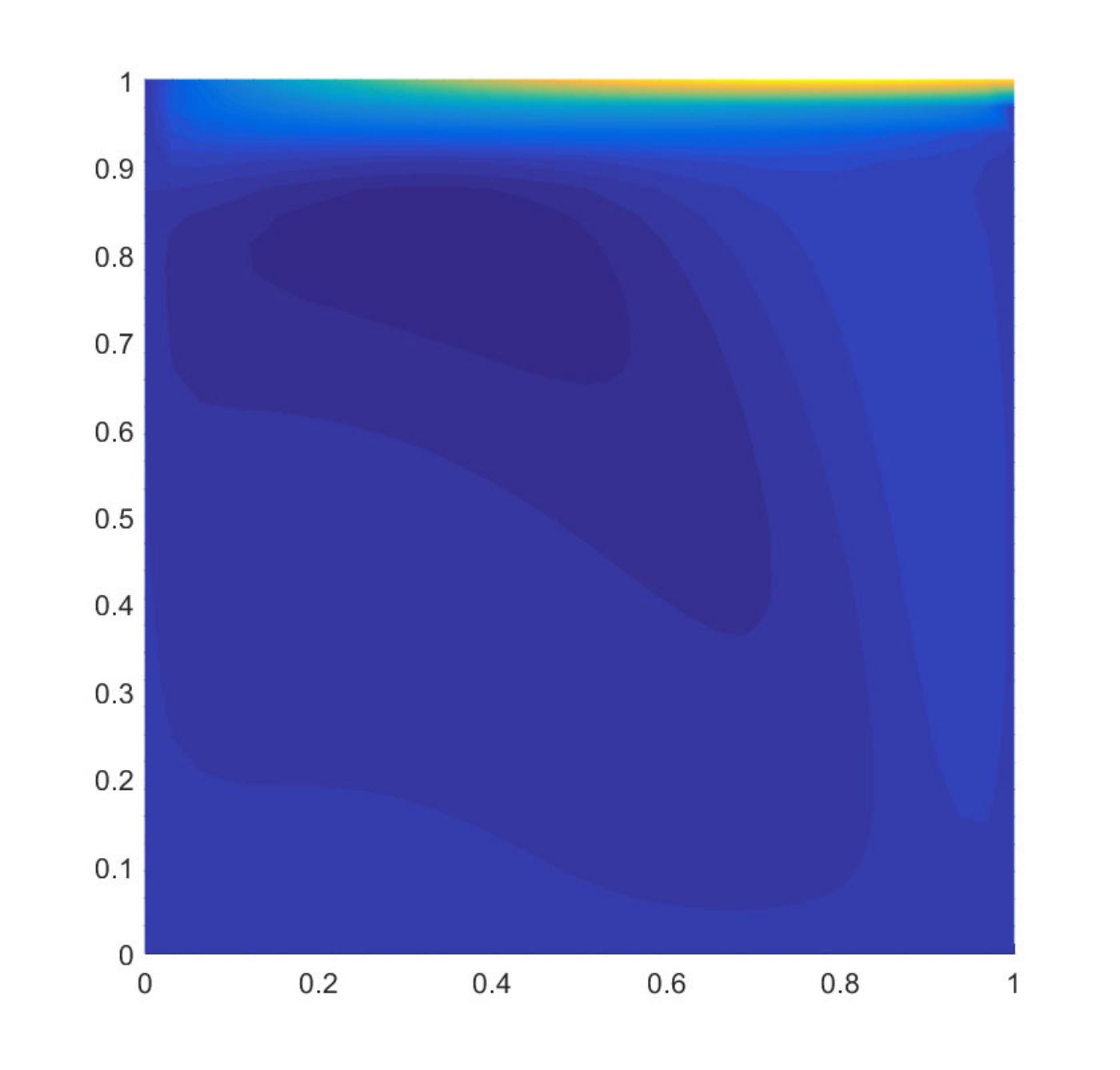}~
 \caption{Contour plots of the numerical solution $u_h$ on   $\Omega_1$ with
  the load functions $f=1$ (left) and $f=0$ (right).}
  \label{fig:33-1}
\end{figure}

\begin{figure}[htb]
  \centering
  \includegraphics[width=.32\textwidth]{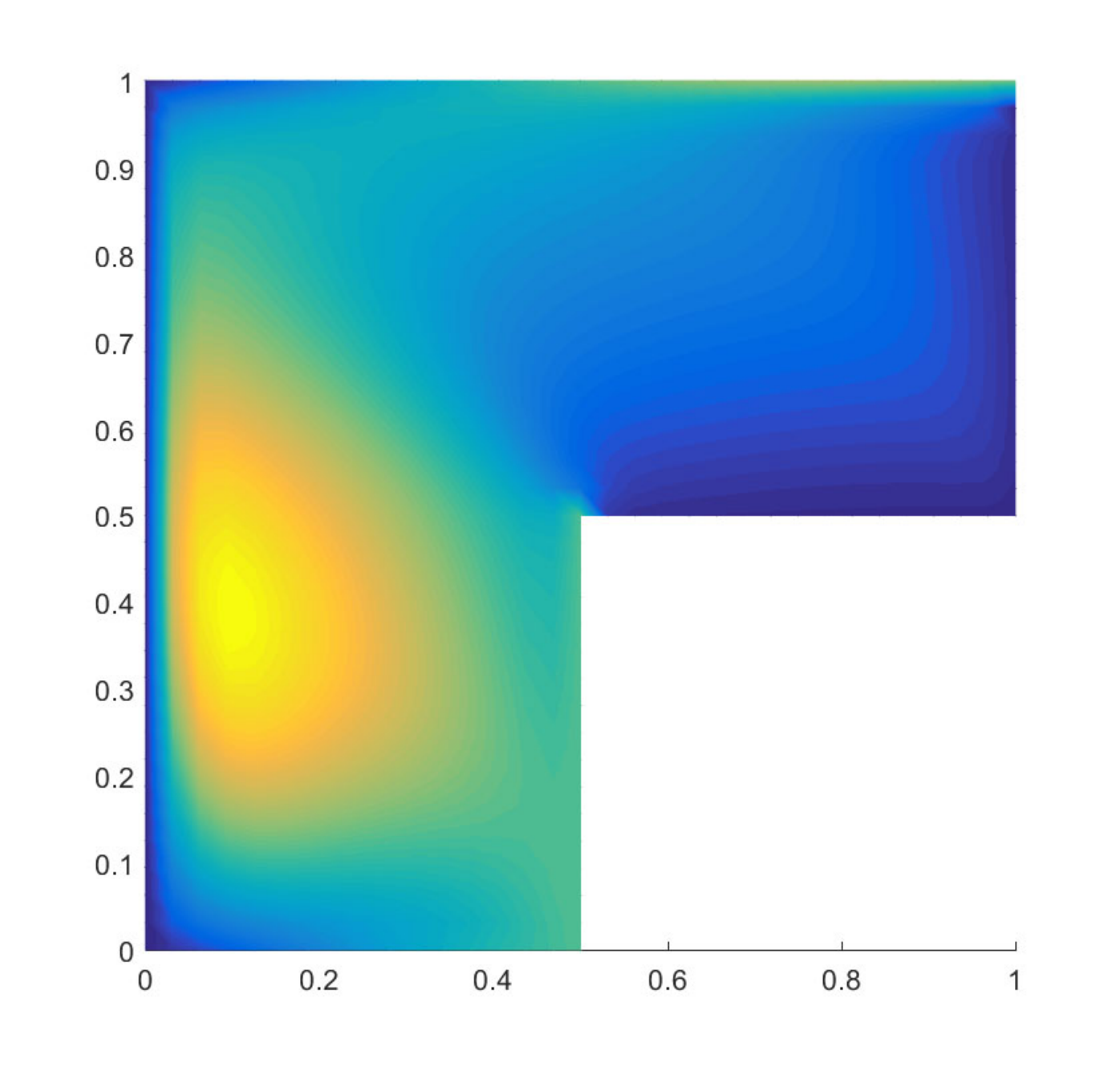}
  \includegraphics[width=.32\textwidth]{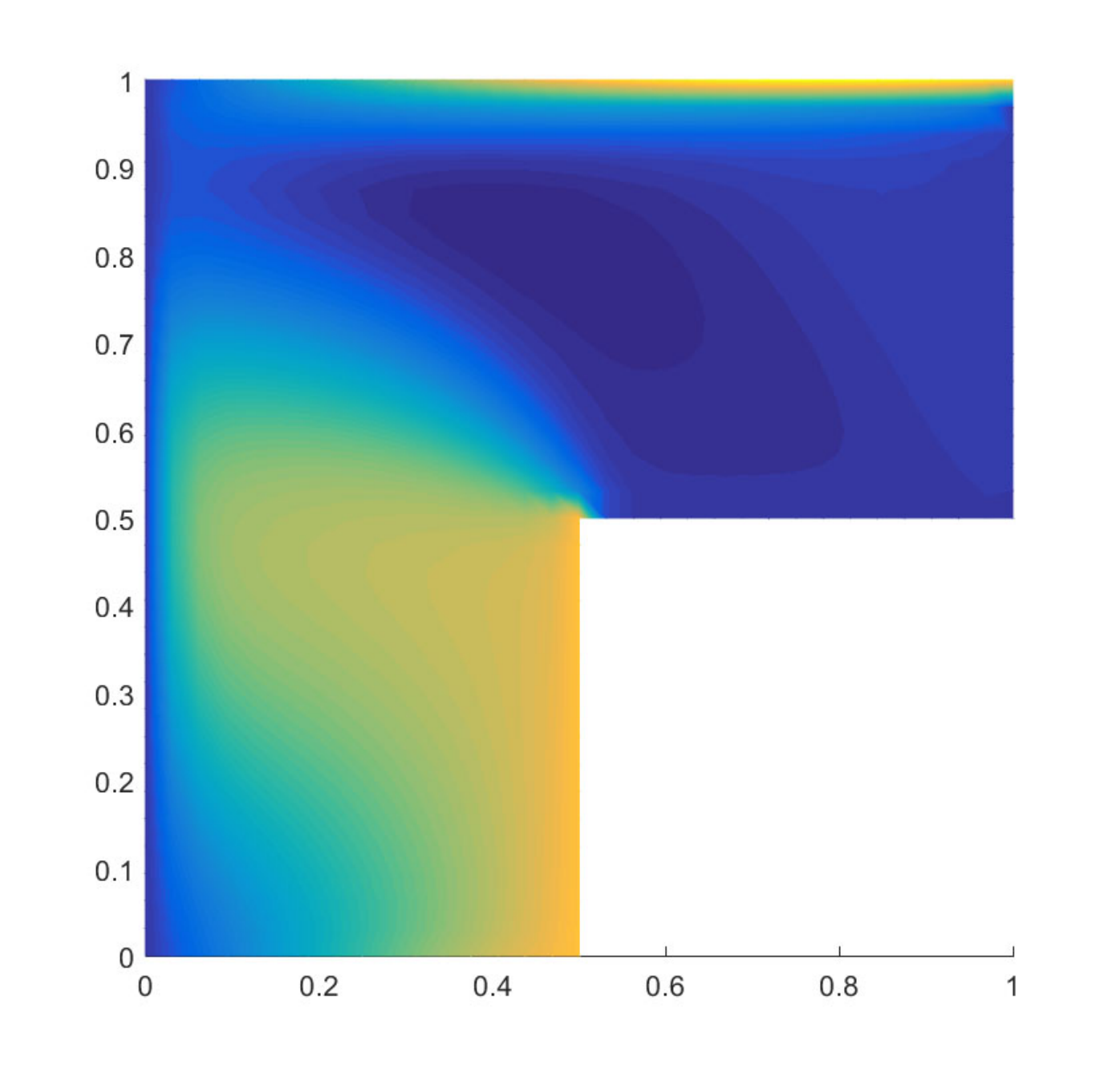}~
 \caption{Contour plots of the numerical solution $u_h$ on   $\Omega_2$ with the load functions $f=1$ (left) and $f=0$ (right).}
\label{fig:34-1}
\end{figure}

\vfill\eject

\end{document}